\documentclass[11pt]{article}
\pretolerance=5000
\tolerance=9000

\usepackage[left=1in,right=1in,top=1in,bottom=1in]{geometry}

\usepackage{amsthm}
\usepackage{hyperref}
    \hypersetup{colorlinks,
        linkcolor={black},
        citecolor={black},
    }
\usepackage{amsmath}

\usepackage{mathrsfs}
\usepackage[nameinlink]{cleveref}
    \crefname{ex}{Example}{Examples}
    \crefname{thm}{Theorem}{Theorems} 
    \crefname{lem}{Lemma}{Lemmas}
    \crefname{prop}{Proposition}{Propositions}
    \crefname{cor}{Corollary}{Corollaries} 
    \crefname{conj}{Conjecture}{Conjectures} 
    \crefname{defn}{Definition}{Definitions}
    \crefname{rmk}{Remark}{Remarks} 
    
	\newtheorem{thm}{Theorem}[section]
	\newtheorem{lem}[thm]{Lemma}

	\newtheorem*{thm*}{Theorem}
	\newtheorem*{cor*}{Corollary}
	\theoremstyle{definition} 
		\newtheorem{defn}[thm]{Definition}
		\newtheorem{ex}[thm]{Example}
    	\newtheorem{rmk}[thm]{Remark}	
    \newcommand{\df}[1]{{\bf\emph{#1}}}			

	\newtheoremstyle{TheoremNum}
        {\topsep}{\topsep}              
        {\itshape}                      
        {}                              
        {\bfseries}                     
        {.}                             
        { }                             
        {\thmname{#1}\thmnote{ \bfseries #3}}
    \theoremstyle{TheoremNum}

\usepackage{tikz, graphicx}
\usepackage[margin=1.5cm]{caption}
\usepackage{subcaption}
    \captionsetup[subfigure]{subrefformat=simple,labelformat=simple}

	\usetikzlibrary{positioning, arrows, arrows.meta}
	\usepackage{pgfplots}
        \tikzset{%
        fwdrxn/.style={very thick, arrows={-Stealth[length=5pt,width=5pt]}},
        revrxn/.style={very thick, arrows={-Stealth[length=5pt,width=5pt,left]}},
        newt/.style={turq, opacity=0.15}
        }
        \tikzset{near start abs-right/.style={xshift=1cm}}
        \tikzset{near start abs-left/.style={xshift=-3.5cm}}
        \tikzset{near start abs-up/.style={yshift=1.5cm}}
        \tikzset{near start abs-down/.style={yshift=-1cm}}
	
\usepackage{color, xcolor}
    
	\newcommand\blue[1]{{\textcolor{blue}{#1}}}
    \definecolor{viridisyellow}{RGB}{253,231,36}
    \definecolor{viridisyellowpale}{RGB}{239,223,81}
    \definecolor{viridisgreen}{RGB}{121,209,81}
        \definecolor{hlgreen}{RGB}{16,115,16}
    \definecolor{viridisturq}{RGB}{34,167,132}
    \definecolor{viridisblue}{RGB}{64,67,135}
    \definecolor{viridisviolet}{RGB}{68,1,84}

    \definecolor{magmapink}{RGB}{188,81,119}
    \definecolor{pastelpink}{RGB}{253,191,210}
    
	\definecolor{ratecnst}{RGB}{172,172,172}
	
\usepackage{multirow}
\usepackage{array}

\usepackage{parskip}

\usepackage[normalem]{ulem}	

\usepackage{amsfonts, amssymb, amsrefs, mathtools}
\newcommand{\eq}[1]{\begin{align*}#1\end{align*}}
	\newcommand{\eqn}[1]{\begin{align}#1\end{align}}  
\newcommand{\ds}{\displaystyle}	
\newcommand{\st}{\colon}          
  
\newcommand\mc[1]{\mathcal{#1}}

\newcommand\mrm[1]{\mathrm{#1}}
 
\newcommand{\rr}{\ensuremath{\mathbb{R}}}   
 
\newcommand{\zz}{\ensuremath{\mathbb{Z}}}
\newcommand{\nn}{\ensuremath{\mathbb{N}}}

\renewcommand{\epsilon}{\varepsilon}	
\renewcommand{\phi}{\varphi}			
\DeclareMathOperator{\ran}{im}	    	
		    %
\DeclareMathOperator{\Span}{span}		
\DeclareMathOperator{\rank}{rank}		    
\newcommand{\kk}{\kappa}
\newcommand{\vv}[1]{{\boldsymbol{#1}}}  
\newcommand{\mm}[1]{\mathbf{#1}}               
\newcommand{\rrp}{\rr_{\geq}}
\newcommand{\rrpp}{\rr_{>}}
\newcommand{\zzp}{\zz_{\geq}}

\newcommand{\xx}{\vv x}
\newcommand{\yy}{\vv y}
\newcommand{\ratecnst}[1]{{\footnotesize{\color{blue}{#1}}}}
\newcommand{\mtxphantom}{\vphantom{\ds\sum}}

\usepackage{enumitem}
\usepackage[version=4]{mhchem}
\usepackage{chemfig}

\usepackage[colorinlistoftodos,prependcaption,textsize=footnotesize,  
    linecolor=orange, bordercolor=orange, backgroundcolor=viridisyellow, 
    textcolor=black]{todonotes}

\newcommand{\WRz}{WR\textsubscript{0}}

\usepackage{algorithm,algorithmic}

\usepackage{authblk}
\title{
    An algorithm for finding weakly reversible deficiency zero realizations of polynomial dynamical systems
}
\author[1,2]{
         Gheorghe Craciun%
}
\author[3]{
        Jiaxin Jin%
}
\author[4]{
        Polly Y. Yu%
}
\affil[1]{\small Department of Mathematics, University of Wisconsin--Madison}
\affil[2]{\small Department of Biomolecular Chemistry, University of Wisconsin--Madison}
\affil[3]{\small Department of Mathematics, The Ohio State University}
\affil[4]{\small NSF-Simons Center for Mathematical and Statistical Analysis of Biology, Harvard University}
\date{} 

\begin{document}

\maketitle

\begin{abstract}
Systems of differential equations with polynomial right-hand sides are very common in applications. On the other hand, their mathematical analysis is very challenging in general, due to the possibility of complex dynamics: multiple basins of attraction, oscillations, and even chaotic dynamics. Even if we restrict our attention to mass-action systems, all of these complex dynamical behaviours are still possible. On the other hand, if a polynomial dynamical system has a {\em weakly reversible deficiency zero (\WRz) realization}, then its dynamics is known to be remarkably simple: oscillations and chaotic dynamics are ruled out and, up to linear conservation laws, there exists a {\em single positive steady state}, which is asymptotically stable. Here we describe an algorithm for finding {\em \WRz }  realizations of polynomial dynamical systems, whenever such realizations exist. 
\end{abstract}

\section{Introduction}
\label{sec:intro} 

By a \emph{polynomial dynamical system} we mean a system of ODEs with polynomial right-hand side, of the form
\begin{equation}\label{eq:poly-intro}
\begin{split}
    \frac{dx_1}{dt} &= p_1(x_1, ..., x_n), \\ 
    \frac{dx_2}{dt} &= p_2(x_1, ..., x_n), \\ 
                    &\qquad \quad \vdots  \\
    \frac{dx_n}{dt} &= p_n(x_1, ..., x_n), \\ 
\end{split}
\end{equation}
where $p_i(x_1,\ldots, x_n) \in \rr[x_1,\ldots, x_n]$. In general, such systems are very difficult to analyze due to nonlinearities and feedback that may give rise to bifurcations, multiple basins of attraction, oscillations, and even chaotic dynamics. The second part of Hilbert's 16th problem (about the number of limit cycles of polynomial dynamical systems in the plane) is still essentially unsolved, even for {\em quadratic} polynomials~\cite{Ilyashenko2002}.  Even the simplest object associated to \eqref{eq:poly-intro}, its steady state set, is central to real algebraic geometry.
 
In terms of applications, polynomial dynamical systems often show up in, for example, chemistry, biology, and population dynamics. In these models, the variable $x_i$ typically represents concentration, population, or another quantity that is strictly positive, so the domain of \eqref{eq:poly-intro} is restricted to the positive orthant.  For example, in an infectious disease model, an infectious individual might covert a susceptible individual; this would contribute a \lq\lq ${}+bxy$\rq\rq\ term to $\frac{dx}{dt}$, where $x$ is the population of susceptible individuals, $y$ the infectious population, and $b > 0$ a parameter measuring the contact rate. Collecting all contributing terms results in an interaction network. An active area of research is to relate the structure of the interaction network to the dynamics generated by it~\cite{Johnston2014, PerezmillanDickensteinShiuConradi2012, ThomsonGunawardena2009, JoshiShiu2015, JoshiShiu2017, BanajiPantea2016, BanajiPantea2018, ShinarFeinberg2010, MinchevaRoussel2007}. 

Conversely, one may start with \eqref{eq:poly-intro} from experimental data, with little or no information on the generating interaction network. One may try to elucidate the underlying interaction network; however, without additional assumptions, a polynomial dynamical system is not uniquely generated by one interaction network, but infinitely many~\cite{CraciunPantea2008}. This lack of identifiability of the underlying network can actually be leveraged to analyze the dynamics: if a network with certain properties can be found to generate \eqref{eq:poly-intro}, then we may be able to immediately infer its dynamical behaviour. 

A class of systems whose dynamics is very well understood is the family of  {\em complex-balanced systems}~\cite{HornJackson1972}, which are also called {\em toric dynamical systems}~\cite{CraciunDickensteinShiuSturmfels2009}. They can never exhibit oscillations or chaotic dynamics, and, up to linear conservation laws, there exists a {\em single positive steady state}, which is locally asymptotically stable~\cite{HornJackson1972}. Moreover, this steady state is conjectured to be a global attractor~\cite{Horn1974_GAC}. 

Not only are the dynamical properties of complex-balanced systems well understood, but also the network and parameter structures that characterize them~\cite{horn1972necessary}. While in general, there are algebraic conditions on the parameters necessary for complex-balancing, the exception to this rule is the case of weakly reversible and deficiency zero (\WRz) networks -- these systems are complex-balanced {\em for any choices of parameters}, in a sense that will be made clear below. This fact is very important in applications, because the exact values of the coefficients in the polynomial right-hand sides of these dynamical systems are often very difficult to estimate accurately in practice.

In this paper we describe an efficient algorithm for determining whether a given polynomial dynamical system admits a \WRz\ realization, and for finding such a realization whenever it exists (see \Cref{algorithm}). Our algorithm does not require solving the differential equation \eqref{eq:poly-intro}, nor does it require solving for its steady state set. Instead, the algorithm, making use of the geometric and log-linear structure of \WRz\ networks, requires as its inputs only the monomials and the matrix of coefficients. If a \WRz\ realization exists, in \Cref{thm:eqm} we provide a bijection between the positive steady state set of \eqref{eq:poly-intro} and the solution to a system of linear equations. 

The paper is organized as follows. In \Cref{sec:bg} we introduce interaction networks as embedded in $\rr^n$ and formalize their relations to polynomial dynamical systems; we also introduce complex-balanced systems, \WRz\ networks, and other relevant notions and results. In \Cref{sec:alg-pf} we describe our algorithm for finding a \WRz\ realization of a given polynomial dynamical system, whose steady state set is studied in \Cref{sec:ss-set}. Our algorithm applies to the case of where the coefficients in the polynomials are unspecified; we consider such systems in \Cref{sec:unspec_coeff}.

\section{Background}
\label{sec:bg}

Throughout this work, we denote by $\rrp^n$ and $\rrpp^n$ the sets of vectors with non-negative and positive entries respectively. Similarly, $\zzp^n$ is the set of vectors with non-negative integer components. Vectors are typically denoted $\xx$, $\yy$, or $\vv w$. We denote by $\dot{\xx}$ the time-derivative $\frac{d\xx}{dt}$. For any $\xx \in \rrpp^n$ and $\yy \in \rr^n$, define the operation $\xx^{\yy} = x_1^{y_1}x_2^{y_2}\cdots x_n^{y_n}$. If $\mm Y = \begin{pmatrix} \yy_1 & \yy_2 & \dots & \yy_n \end{pmatrix}$, then $\xx^{\mm Y} = (\xx^{\yy_1}, \xx^{\yy_2},\ldots, \xx^{\yy_n})^\top$. The support of a vector $\xx \in \rr^n$ is the set of indices $\mrm{supp}(\xx) = \{ i \st x_i \neq 0\}$.

\subsection{Dynamical systems and Euclidean embedded graphs}
\label{sec:intro-EEG}

In this section, we introduce the Euclidean embedded graph (E-graph), a directed graph in $\rr^n$, and explain how a system of differential equations with polynomial right-hand side (a polynomial dynamical system) is defined by it.

\begin{defn}
\label{def:EEG}
    A \df{Euclidean embedded graph} (\df{E-graph}) in $\rr^n$ is a directed graph $(V,E)$, where $V$ is a finite subset of $\rrp^n$, and there are neither self-loops nor isolated vertices. Denote by $V_s$ the set of source vertices.
\end{defn}

Let $V = \{ \yy_1,\yy_2,\ldots, \yy_m\}$. An edge $(\yy_i, \yy_j)$, or $(i,j) \in E$, is also denoted $\yy_i \to \yy_j$. Since vertices are points in $\rr^n$, an edge can be regarded as a bona fide vector between vertices. An \df{edge vector} $\yy_j - \yy_i$ is associated to the edge $\yy_i \to \yy_j$. 

For the purpose of using E-graphs to study polynomial dynamical systems, we assume $V_s \subset \zzp^n$, even though most results stated in this paper hold for $V \subset \rrp^n$. 

The set of vertices $V$ of $(V,E)$ is partitioned by its connected components, which we identify by the subset of vertices that belong to that connected component. If every connected component is strongly connected, i.e., every edge is part of a cycle, then $(V,E)$ is said to be \df{weakly reversible}. 

Two geometric properties of the E-graph will become important to our analysis of polynomial dynamical system. The first is a notion of affine independence within each connected component; the second is a notion of linear independence between connected components.

\begin{defn}
\label{def:aff-indep}
    An E-graph $(V,E)$ has \df{affinely independent connected components} if the vertices in each connected component are affinely independent, i.e., if $\{ \yy_0, \yy_1, \ldots, \yy_r\} \subseteq V$ is a connected component, then the set $\{ \yy_j - \yy_0 \st j=1,2,\ldots, r \}$ is linearly independent.
\end{defn}

\begin{defn}
\label{def:stoichsubsp}
    Let $(V, E)$ be an E-graph. For any $U \subseteq V$, the \df{associated linear subspace of $U$} is $S(U) = \Span \{ \yy_j - \yy_i \st  \yy_i, \,\yy_j \in U\}$.
    The \df{associated linear space}\footnote{In reaction network theory literature, the associated linear space is called the \emph{stoichiometric subspace} of the network.} of $(V,E)$ is 
        \eq{
        S = \Span\{ \vv y_j - \vv y_i \st \yy_i \to \yy_j \in E\} .
    }
\end{defn}

If $U$ defines a connected component of $(V,E)$, then $S(U) \subseteq S$. Indeed, if $V_1$, $V_2, \ldots, V_\ell$ are the connected components, then $S = S(V_1) + S(V_2) +  \cdots + S(V_\ell)$.

Thus far, we have defined an E-graph, and introduced several objects and properties associated to it. We now turn our attention to how such a graph is canonically associated to dynamics, by assigning a positive weight to each edge. 

\begin{defn}
\label{def:MAS} 
    Let $(V,E)$ be an E-graph. For each $\yy_i \to \yy_j \in E$, let $\kk_{ij} > 0$ be its weight, and let $\vv\kk = (\kk_{ij}) \in \rrpp^E$. The \df{associated dynamical system} on $\rrpp^n$ of the weighted E-graph $(V,E,\vv\kk)$ is 
    \eqn{\label{eq:MAS} 
        \frac{d\xx}{dt} = \sum_{(i,j) \in E} \kk_{ij} \xx^{\yy_i} (\yy_j - \yy_i). 
    }
\end{defn}

It is sometimes convenient to refer to $\kk_{ij}$ even though $\yy_i \to \yy_j$ may not be an edge in the network. In such cases, set $\kk_{ij} = 0$.

\begin{rmk}
We defined the domain of \eqref{eq:MAS} to be $\rrpp^n$. Systems of ODEs with polynomial right-hand side do not in general leave $\rrpp^n$ forward-invariant, but if we assume $V \subset \zzp^n$, the positive orthant $\rrpp^n$ is indeed forward-invariant under \eqref{eq:MAS}~\cite{Sontag2001}. 
\end{rmk}

It is clear that the right-hand side of \eqref{eq:MAS} lies in the associated linear space $S$, so any solution to \eqref{eq:MAS} is confined to a translate of $S$. By the above remark, any solution to \eqref{eq:MAS} where $V \subset \zzp^n$ with initial condition $\xx_0 \in \rrpp^n$ is confined to $(\xx_0+S)\cap \rrpp^n$, which is called the \df{invariant polyhedron of $\xx_0$}.

\begin{figure}[h!tbp]
	\centering
	\begin{subfigure}[b]{0.3\textwidth}
	\centering 
		\begin{tikzpicture}
		\draw [opacity=0, -{stealth}, thick, blue, transform canvas={ yshift=-0.31ex}] (0,0)--(2,0) node [midway, below] {\footnotesize $1$}; 
        \draw [step=1, gray!50!white, thin] (0,0) grid (2.5,2.5);
        \node at (0,2.75) {};
		\node at (0,-0.25) {};
            \draw [->, gray] (0,0)--(2.5,0);
            \draw [->, gray] (0,0)--(0,2.5);
            \node [inner sep=0pt, blue] (0) at (0,0) {$\bullet$};
            \node [inner sep=0pt, blue] (2x2y) at (2,2) {$\bullet$};
            \draw [-{stealth}, thick, blue, transform canvas={xshift=-0.2ex, yshift=0.2ex}] (0)--(2x2y) node [near start, above] {\ratecnst{$3$}};
            \draw [-{stealth}, thick, blue, transform canvas={xshift=0.2ex, yshift=-0.2ex}] (2x2y)--(0) node [near start, below] {\ratecnst{$2$}};
            \node [inner sep=0pt, blue] (xy) at (2,0) {$\bullet$};
            \node [inner sep=0pt, blue] (2y) at (0,2) {$\bullet$};
            \draw [-{stealth}, thick, blue, transform canvas={xshift=0.2ex, yshift=0.2ex}] (2y)--(xy) node [near start, above] {\ratecnst{$3$}};
            \draw [-{stealth}, thick, blue, transform canvas={xshift=-0.2ex, yshift=-0.2ex}] (xy)--(2y) node [near start, below] {\ratecnst{$5$}};
		\end{tikzpicture}
		\caption{}
		\label{fig:intro-ex-a}
	\end{subfigure}
		\hspace{0cm}
	\begin{subfigure}[b]{0.3\textwidth}
	\centering
		\begin{tikzpicture}
        \draw [step=1, gray!50!white, thin] (0,0) grid (2.5,2.5);
        \node at (0,2.75) {};
		\node at (0,-0.25) {};
            \draw [->, gray] (0,0)--(2.5,0);
            \draw [->, gray] (0,0)--(0,2.5);
            \node [inner sep=0pt, blue] (2x) at (2,0) {$\bullet$};
            \node [inner sep=0pt, blue] (2y) at (0,2) {$\bullet$};
            \node [inner sep=0pt, blue] (0) at (0,0) {$\bullet$};
            \node [inner sep=0pt, blue] (2x2y) at (2,2) {$\bullet$};
            \draw [-{stealth}, thick, blue, transform canvas={ yshift=0.31ex}] (0)--(2x) node [midway, above] {\ratecnst{$3$}};
            \draw [-{stealth}, thick, blue, transform canvas={ yshift=-0.31ex}] (2x)--(0) node [midway, below] {\ratecnst{$5$}}; 
            \draw [-{stealth}, thick, blue, transform canvas={ yshift=0.31ex}] (2y)--(2x2y) node [midway, above] {\ratecnst{$3$}};
            \draw [-{stealth}, thick, blue, transform canvas={ yshift=-0.31ex}] (2x2y)--(2y) node [midway, below] {\ratecnst{$1$}}; 
            \draw [-{stealth}, thick, blue, transform canvas={ xshift=-0.31ex}] (0)--(2y) node [midway, left] {\ratecnst{$3$}};
            \draw [-{stealth}, thick, blue, transform canvas={ xshift=0.31ex}] (2y)--(0) node [midway, right] {\ratecnst{$3$}}; 
            \draw [-{stealth}, thick, blue, transform canvas={ xshift=-0.31ex}] (2x)--(2x2y) node [midway, left] {\ratecnst{$5$}};
            \draw [-{stealth}, thick, blue, transform canvas={ xshift=0.31ex}] (2x2y)--(2x) node [midway, right] {\ratecnst{$1$}}; 
            
            \node[outer sep=2.25pt] (diagstart) at (2,2) {};
            \node[outer sep=2.25pt] (diagend) at (0,0) {};
            \draw [-{stealth}, thick, blue] (diagstart)--(diagend) node [midway, above left] {\ratecnst{$1$}\!\!};
		\end{tikzpicture}
		\caption{}
		\label{fig:intro-ex-b}
	\end{subfigure}	
		\hspace{0cm}
	\begin{subfigure}[b]{0.3\textwidth}
	\centering
		\begin{tikzpicture}
        \draw [step=1, gray!50!white, thin] (0,0) grid (2.5,2.5);
        \node at (0,2.75) {};
		\node at (0,-0.25) {};
            \draw [->, gray] (0,0)--(2.5,0);
            \draw [->, gray] (0,0)--(0,2.5);
            \node [inner sep=0pt, blue] (2x) at (2,0) {$\bullet$};
            \node [inner sep=0pt, blue] (2y) at (0,2) {$\bullet$};
            \node [inner sep=0pt, blue] (0) at (0,0) {$\bullet$};
            \node [inner sep=0pt, blue] (2x2y) at (2,2) {$\bullet$};
            \node [inner sep=0pt, blue] (xy) at (1,1) {$\bullet$};
            \draw [-{stealth}, thick, blue] (0)--(xy) node [midway, left] {\ratecnst{$6$}};
            \draw [-{stealth}, thick, blue] (2x)--(xy) node [midway, right] {\ratecnst{$10$}}; 
            \draw [-{stealth}, thick, blue] (2y)--(xy) node [midway, left] {\ratecnst{$6$}};
            \draw [-{stealth}, thick, blue] (2x2y)--(xy) node [midway, right] {\ratecnst{$4$}}; 
		\end{tikzpicture}
		\caption{}
		\label{fig:intro-ex-c}
	\end{subfigure}	
	\caption{Weighted E-graphs from \Cref{ex:intro}.}
	\label{fig:intro-ex}
\end{figure}

\begin{ex}
\label{ex:intro} 
We illustrate the notions and notations defined above. \Cref{fig:intro-ex} shows three examples of weighted E-graphs. The graphs in \Cref{fig:intro-ex-a,fig:intro-ex-b} are weakly reversible, but that in \Cref{fig:intro-ex-c} is not. The graph in \Cref{fig:intro-ex-a} has two connected components, each of which is affinely independent; however, those in \Cref{fig:intro-ex-b,fig:intro-ex-c} do \emph{not} have affinely independent connected components.
    
The associated dynamical system of \Cref{fig:intro-ex-a} is 
\eqn{\label{eq:intro-ex} 
    \frac{d}{dt} \begin{pmatrix} x_1 \\ x_2 \end{pmatrix} 
    &= 3 \begin{pmatrix} 2 \\ 2 \end{pmatrix} 
    + 5x_1^2 \begin{pmatrix*}[r] -2 \\ 2 \end{pmatrix*} 
    + 2x_1^2x_2^2 \begin{pmatrix} -2 \\ -2 \end{pmatrix} 
    + 3x_2^2 \begin{pmatrix*}[r] 2 \\ -2 \end{pmatrix*} 
    \nonumber 
    \\&= \begin{pmatrix} 
    6 -10x_1^2 -4x_1^2x_2^2 + 6x_2^2 
    \\[5pt] 
    6 + 10x_1^2 -4x_1^2x_2^2-6x_2^2 \end{pmatrix}. 
}
The source vertices play the role of exponents in the monomials, thus the set of source vertices $V_s$ determines the monomials in the associated dynamical system. 
    
It so happens that the weighted E-graphs in \Cref{fig:intro-ex-b,fig:intro-ex-c} also have \eqref{eq:intro-ex} as their associated dynamical systems. We say that the three weighted E-graphs in \Cref{fig:intro-ex} are \emph{dynamically equivalent}, and the weighted graphs are \emph{realizations} of the dynamical system \eqref{eq:intro-ex}; we define these terms precisely in \Cref{def:DE}. This example demonstrates that while a weighted E-graph is associated to a unique dynamical system, the converse is not true; there is in general infinitely many realizations of a given polynomial dynamical system~\cite{CraciunPantea2008}. This work is concerned with finding a realization that guarantee certain algebraic and stability properties. 
\end{ex}

Another way to study the vector field generated by \eqref{eq:MAS} is to use a linear combination of some fixed vectors, one for each monomial, with the coefficients given by the strength of the monomials at that point. We give a name to those fixed vectors.

\begin{defn}
\label{def:directvect}
    Let $(V,E,\vv\kk)$ be a weighted E-graph, and $\yy_i \in V_s$. The \df{net direction vector from $\yy_i$} is 
    \eq{
        \vv w_i = \sum_{\yy_j \in V} \kk_{ij} (\yy_j - \yy_i).
    }
    The \df{matrix of net direction vectors} of $(V,E,\vv\kk)$ is 
    \eq{ 
        \mm W = \begin{pmatrix}\mtxphantom
        \vv w_1 & \vv w_2 & \cdots & \vv w_m 
        \end{pmatrix}. 
    }
\end{defn}
For convenience, we may refer to the net direction vector even if $\yy_i \not\in V_s$; in this case, let the net direction vector be zero. Such a net direction vector will \emph{not} show up as a column of $\mm W$.

The matrix $\mm W$ from \Cref{def:directvect} is also well defined when we start not with a weighted E-graph, but with a fixed polynomial dynamical system of the form 
\eqn{\label{eq:polyRHS} 
        \frac{d\xx}{dt}  = \sum_{i=1}^m \xx^{\yy_i} \vv w_i. 
    }
Note that any polynomial dynamical systems can be uniquely written as such, for some $\yy_1$, $\yy_2,\ldots, \yy_m \in \zzp^n$ distinct, and $\vv w_1$, $\vv w_2,\ldots, \vv w_m \in \rr^n$ non-zero. 

\begin{defn}
\label{def:of-ODE}
    Consider the polynomial dynamical system  \eqref{eq:polyRHS}. The \df{matrix of source vertices $\mm Y_s$} and the \df{matrix of net direction vectors $\mm W$} of  \eqref{eq:polyRHS} are 
    \begin{equation*}
        \mm Y_s = \begin{pmatrix}\mtxphantom
        \yy_1 & \yy_2 & \cdots & \yy_m 
        \end{pmatrix}
        \quad \text{and} \quad 
        \mm W = \begin{pmatrix}\mtxphantom
        \vv w_1 & \vv w_2 & \cdots & \vv w_m 
        \end{pmatrix}. 
    \end{equation*}
\end{defn}

Clearly, $\dot{\xx} = \mm W \xx^{\mm Y_s}$. 

Thus far, we start with a weighted E-graph $(V,E,\vv\kk)$, and from it, define a dynamical system. The goal of the present work is the converse direction: start with a polynomial dynamical system, find some $(V,E,\vv\kk)$, ideally with certain properties, that gives rise to such dynamics. For example, \eqref{eq:polyRHS} is generated by the graph $\yy_i \xrightarrow{\,\,1\,\,} \yy_i + \vv w_i$, for $i=1,2,\ldots, m$.  As \Cref{ex:intro} illustrates, there are in general many weighted E-graphs that can generate the same dynamics.

\begin{defn}
\label{def:DE}
    A \df{realization} of a polynomial dynamical system $\dot{\xx} = \vv f(\xx)$ is a weighted E-graph $(V,E,\vv\kk)$ whose associated dynamical system is precisely $\dot{\xx} = \vv f(\xx)$. Two realizations of $\dot{\xx} = \vv f(\xx)$ are said to be \df{dynamically equivalent}. 
\end{defn}

\begin{lem}[\cite{CraciunJinYu2019}*{Definition 2.6}] 
    The weighted E-graphs $(V,E,\vv\kk)$ and $(V',E',\vv\kk')$ are dynamically equivalent if and only if the net direction vector from $\yy_i$ in $(V,E,\vv\kk)$ coincides with that in $(V',E',\vv\kk')$, for all $\yy_i \in V_s \cup V'_s$. 
\end{lem}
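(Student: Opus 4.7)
The plan is to unwind the definitions and reduce the claim to the linear independence of distinct monomials on $\rrpp^n$. By \Cref{def:DE}, dynamical equivalence means that the vector fields generated by the two weighted E-graphs agree on $\rrpp^n$. Using \Cref{def:MAS} and grouping the sum $\sum_{(i,j) \in E}\kk_{ij}\xx^{\yy_i}(\yy_j - \yy_i)$ by the source vertex $\yy_i$, we recognize the coefficient of $\xx^{\yy_i}$ as precisely the net direction vector $\vv w_i$ of \Cref{def:directvect}. Hence the associated dynamical systems can be rewritten as
\begin{equation*}
    \dot{\xx} \;=\; \sum_{\yy_i \in V_s} \xx^{\yy_i}\,\vv w_i \quad\text{and}\quad \dot{\xx} \;=\; \sum_{\yy_i \in V'_s} \xx^{\yy_i}\,\vv w'_i,
\end{equation*}
where $\vv w_i$ and $\vv w'_i$ denote the net direction vectors from $\yy_i$ in the two E-graphs.

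Next, I would adopt the convention (already introduced after \Cref{def:directvect}) that $\vv w_i = 0$ whenever $\yy_i \notin V_s$, and likewise for $\vv w'_i$. This lets me re-index both sums over the common finite set $V_s \cup V'_s$, and dynamical equivalence then becomes the polynomial identity
\begin{equation*}
    \sum_{\yy_i \in V_s \cup V'_s} \xx^{\yy_i}\,\bigl(\vv w_i - \vv w'_i\bigr) \;=\; \vv 0 \qquad \text{for all } \xx \in \rrpp^n.
\end{equation*}

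The remaining step is to invoke linear independence of monomials: the functions $\xx \mapsto \xx^{\yy_i}$ indexed by distinct $\yy_i \in \zzp^n$ are linearly independent on $\rrpp^n$ (a standard fact, since a nonzero polynomial cannot vanish on an open subset of $\rr^n$, and monomials form a basis of $\rr[x_1,\ldots,x_n]$). Applied component-wise in $\rr^n$, this forces $\vv w_i - \vv w'_i = \vv 0$ for every $\yy_i \in V_s \cup V'_s$, which is the desired equality of net direction vectors. The reverse direction is immediate: if all net direction vectors agree on $V_s \cup V'_s$, then the two displayed expressions for $\dot{\xx}$ are identical term-by-term, so the associated dynamical systems coincide.

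No step is particularly difficult; the only subtlety worth flagging is being careful with the convention that $\vv w_i$ makes sense for $\yy_i \notin V_s$ (set to zero) so that both sums can be taken over the union $V_s \cup V'_s$, which is precisely the index set appearing in the statement.
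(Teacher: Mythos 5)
Your proof is correct and follows exactly the route the paper takes: the paper's proof is the one-line remark that the claim ``follows from the linear independence of monomials as functions on $\rrpp^n$,'' and your argument simply fills in the details of that same reduction. The care you take with the zero-extension convention for $\vv w_i$ when $\yy_i \notin V_s$ is a worthwhile addition but not a departure from the paper's approach.
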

\begin{proof}
    This follows from the linear independence of monomials as functions on $\rrpp^n$. 
\end{proof}

\subsection{Complex-balanced systems and \WRz\ systems}
\label{sec:intro-CB}

General polynomial dynamical systems can display a wide range of dynamical behaviours, ranging from stable or unstable steady states, limit cycles, and even chaos. In this work, we are interested in the family of \emph{complex-balanced systems}, which  enjoy various algebraic and stability properties.

\begin{defn}
\label{def:CB}
    Let $(V,E,\vv\kk)$ be a weighted E-graph in $\rr^n$, and let $\dot{\xx} = \vv f(\xx)$ be its associated dynamical system. A state $\xx^* \in \rrpp^n$ is said to be a \df{positive steady state} if $\vv f(\xx^*) = \vv 0$. Let $V_>(\vv f)$ be the set of positive steady states. A state $\xx^* > \vv 0$ is a \df{complex-balanced steady state} if at every $\yy_i \in V$, we have 
    \eq{ 
        \sum_{(i,j) \in E} \kk_{ij} (\xx^*)^{\yy_i} 
        = \sum_{(j,i) \in E} \kk_{ji} (\xx^*)^{\yy_j} . 
    }
\end{defn}
The equations above can be interpreted as balancing the fluxes flowing across the vertex $\yy_i$. If a weighted E-graph $(V,E,\vv\kk)$ admits one complex-balanced steady state, then every positive steady state is complex-balanced~\cite{HornJackson1972}; such a $(V,E,\vv\kk)$ is called a \df{complex-balanced system}. 

These systems first arose from the study of chemical systems under mass-action kinetics, as a generalization of thermodynamic equilibrium. The following theorem lists some of the most important results about complex-balanced systems. For more details, see \cite{YuCraciun2018_review, GunaNts, Feinberg1987}.

\begin{thm}[\cite{HornJackson1972}]
\label{thm:HJ}
Let $(V,E,\vv\kk)$ be a complex-balanced system, with steady state $\xx^* \in \rrpp^n$, and associated linear space $S$. Then the following are true:
\begin{enumerate}[label={(\roman*)}]
\item All positive steady states are complex-balanced, and there is exactly one steady state within each invariant polyhedron. 

\item Any complex-balanced steady state $\xx$ satisfies $\ln \xx - \ln \xx^* \in S^\perp$.

\item The function
    \eq{ 
        L(\xx) = \sum_{i=1}^n x_i(\ln x_i - \ln x^*_i - 1), 
    }
defined on $\rrpp^n$, is a strict Lyapunov function within each invariant polyhedron $(\xx_0+S)\cap \rrpp^n$, with a global minimum at the corresponding complex-balanced steady state. 

\item Every complex-balanced steady state is asymptotically stable with respect to its invariant polyhedron. 
\end{enumerate}
\end{thm}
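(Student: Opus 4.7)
My strategy is to build the whole theorem around the Lyapunov function $L$, establishing (iii) first and then deducing (i), (ii), (iv) from it. The first step is to compute $\dot L$ along a solution. Using $\nabla L(\xx) = \ln \xx - \ln \xx^*$ and the form of $\dot{\xx}$ from \eqref{eq:MAS}, one obtains
\begin{equation*}
    \dot L = \sum_{(i,j)\in E} \kk_{ij}\xx^{\yy_i}(\yy_j - \yy_i)^\top (\ln \xx - \ln \xx^*).
\end{equation*}
I would then introduce the substitutions $\mu_i = \yy_i^\top(\ln \xx - \ln \xx^*)$ and the rescaled rates $a_{ij} = \kk_{ij}(\xx^*)^{\yy_i}$, which absorb the steady state and recast the sum as $\dot L = \sum_{(i,j)\in E} a_{ij} e^{\mu_i}(\mu_j - \mu_i)$. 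The complex-balanced hypothesis at $\xx^*$, namely $\sum_{j:(k,j)\in E} a_{kj} = \sum_{j:(j,k)\in E} a_{jk}$ at each vertex $\yy_k$, is equivalent after reindexing to the edge identity $\sum_{(i,j)\in E} a_{ij}(e^{\mu_j} - e^{\mu_i}) = 0$.

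The second step is to combine this identity with the convexity inequality $e^{\mu_i}(\mu_j - \mu_i) \leq e^{\mu_j} - e^{\mu_i}$, which yields
\begin{equation*}
    \dot L = \sum_{(i,j)\in E} a_{ij}\bigl[e^{\mu_i}(\mu_j - \mu_i) - (e^{\mu_j} - e^{\mu_i})\bigr] \leq 0,
\end{equation*}
with equality if and only if $\mu_i = \mu_j$ along every edge. Since $\mu_j - \mu_i = (\yy_j - \yy_i)^\top(\ln \xx - \ln \xx^*)$, equality forces $\ln \xx - \ln \xx^* \in S^\perp$. Any positive steady state $\xx$ trivially satisfies $\dot L(\xx) = 0$, so this proves (ii); plugging the equality $\mu_i = \mu_j$ back into the vertex balance (after multiplying by $e^{\mu_i}$) shows that every such steady state is in fact complex-balanced, giving the first half of (i). The strict Lyapunov property in (iii) then follows because, within a fixed invariant polyhedron, the equality locus $\{\xx : \ln \xx - \ln \xx^* \in S^\perp\} \cap (\xx^* + S)\cap\rrpp^n$ consists of a single point, namely $\xx^*$ itself.

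For the remaining uniqueness claim in (i), I would invoke Birch's theorem: the exponential manifold $\{\xx \in \rrpp^n : \ln \xx - \ln \xx^* \in S^\perp\}$ meets each invariant polyhedron $(\xx_0 + S)\cap\rrpp^n$ at exactly one point, which by (ii) is the unique positive steady state there and, by strict convexity of $L$, is its unique minimizer on that polyhedron. Finally, (iv) follows from standard Lyapunov theory applied on each invariant polyhedron: $L$ is strictly convex with $\nabla L(\xx^*) = \vv 0$ restricted to $\xx^* + S$, so $\xx^*$ is a strict local minimum, and $\dot L < 0$ off equilibrium gives asymptotic stability relative to the polyhedron.

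The main technical step is the reindexing that turns the vertex-level complex-balanced condition into the edge-level identity $\sum a_{ij}(e^{\mu_j} - e^{\mu_i}) = 0$ — this is precisely where the complex-balanced hypothesis is used — paired with the convexity inequality whose equality case pins down $S^\perp$. Everything else is bookkeeping plus one appeal to Birch's theorem for uniqueness inside each invariant polyhedron.
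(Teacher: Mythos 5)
The paper does not prove this theorem; it is quoted verbatim from Horn and Jackson \cite{HornJackson1972} as a known result, so there is no in-paper proof to compare against. Your argument is a correct reconstruction of the classical Horn--Jackson proof: the computation $\dot L = \sum_{(i,j)\in E} a_{ij}e^{\mu_i}(\mu_j-\mu_i)$ with $a_{ij}=\kk_{ij}(\xx^*)^{\yy_i}$, the conversion of the vertex-level balance into the edge identity $\sum a_{ij}(e^{\mu_j}-e^{\mu_i})=0$, and the convexity bound $e^{\mu_i}(\mu_j-\mu_i)\le e^{\mu_j}-e^{\mu_i}$ with equality iff $\mu_i=\mu_j$ are exactly the ingredients of the original proof, and your deductions of (i)--(iv) from the equality locus $\ln\xx-\ln\xx^*\in S^\perp$ are sound. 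The only point I would flag is that Birch's theorem, which you invoke as a black box for existence and uniqueness of the steady state in each invariant polyhedron, is itself the one genuinely nontrivial remaining step (one must show the minimizer of $L$ over the closed polyhedron is attained in the interior); Horn and Jackson prove it from the strict convexity and boundary behaviour of $L$, so citing it is legitimate but it is where the residual work lives.
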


Beside these properties, complex-balanced systems enjoy other remarkable  algebraic and dynamical properties. For example, the set of positive steady states $V_>(\vv f)$ admits a monomial parametrization~\cite{CraciunDickensteinShiuSturmfels2009, PerezmillanDickensteinShiuConradi2012}. Each positive steady state $\xx^*$ is in fact linearly stable with respect to its invariant polyhedron~\cite{Johnston_note, BorosMuellerRegensburger2020}. Complex-balanced systems are also conjectured to be \emph{persistent} and \emph{permanent}\footnote{%
    Roughly speaking, persistence is the property that starting in $\rrpp^n$, the solution is always bounded away from the boundary of $\rrpp^n$, and permanence occurs when solutions always converge to a compact subset of the invariant polyhedron.  
}~\cite{CraciunNazarovPantea2013_GAC}. Moreover, the unique steady state is conjectured to be \emph{globally} stable within its invariant polyhedron~\cite{Horn1974_GAC}. The \emph{Persistence} and \emph{Permanence Conjectures} have been proved in several cases, such as when there is only one connected component~\cite{Anderson2011_GAC, BorosHofbauer2019_GAC}, or the ambient state space is $\rr^2$~\cite{CraciunNazarovPantea2013_GAC}, or the E-graph is {\em strongly endotactic}~\cite{GopalkrishnanMillerShiu2014}, or the associated linear space $S$ is of dimension two and all trajectories are bounded~\cite{Pantea2012_GAC}. The \emph{Global Attractor Conjecture} has also been proved if    there is only one connected component~\cite{Anderson2011_GAC, BorosHofbauer2019_GAC}, or the E-graph is {strongly endotactic}~\cite{GopalkrishnanMillerShiu2014}, or the ambient state space is $\rr^3$~\cite{CraciunNazarovPantea2013_GAC}, or when the associated linear space $S$ is of dimension at most three~\cite{Pantea2012_GAC}.

Besides dynamical stability, complex-balanced systems are characterized graph-theoretically and algebraically. Horn proved in \cite{Horn1972} that $(V,E,\vv\kk)$ is complex-balanced if and only if $(V,E)$ is weakly reversible and $\vv\kk$ satisfies some algebraic equations, the number of which is measured by a non-negative integer called the \emph{deficiency} of $(V,E)$. 

\begin{defn}
\label{def:def}
    Let $(V,E)$ be an E-graph with $\ell$ connected components, and let $S$ be its associated linear space. The \df{deficiency} of $(V,E)$ is the integer $\delta = |V| - \ell - \dim S$.
\end{defn}

The notion of deficiency can also be applied  to the connected components. Suppose $V_1$, $V_2,\ldots, V_\ell$ are the connected components of $(V,E)$. The \df{deficiency of a connected component $V_p$} is $\delta_p = |V_p| - 1 - \dim S(V_p)$. It is easy to see that 
    \eq{ 
        \delta \geq \sum_{p=1}^\ell \delta_p,
    }
with equality if and only if  $S(V_1)$, $S(V_2),\ldots, S(V_\ell)$ are linearly independent. If $\delta = 0$, then necessarily $\delta_p = 0$ for all $p$.

If $(V,E)$ is weakly reversible and $\delta = 0$, then the associated dynamical system is always complex-balanced, regardless of the choice of $\vv\kk$. This result is known as the Deficiency Zero Theorem~\cite{Feinberg1972,horn1972necessary}. The deficiency is a property of the E-graph, not of the associated dynamical system, yet in the case of deficiency zero, it has strong implications on the dynamics. The goal of this paper is to search for weakly reversible and deficiency zero (\WRz) realizations for polynomial dynamical systems, which are automatically complex-balanced, and therefore obey the properties listed in \Cref{thm:HJ}.

Deficiency also has a \emph{geometric interpretation}; $\delta = 0$ if and only if $(V,E)$ has affinely independent connected components $S_1$, $S_2,\ldots, S_\ell$, and the subspaces $S(V_1)$, $S(V_2), \ldots, S(V_\ell)$ are linearly independent~\cite[Theorem 9]{CraciunJohnstonSzederkenyiTonelloTothYu2020}. Later we make use of this interpretation when searching for \WRz\ realizations.

The system \eqref{eq:MAS} admits a matrix decomposition that aids in studying complex-balanced steady states. For a weighted E-graph $(V,E,\vv\kk)$ where $|V| = m$, its associated dynamical system \eqref{eq:MAS} can decomposed be as $\dot{\xx} = \mm Y \mm A_{\vv\kk} \xx^{\mm Y}$~\cite{HornJackson1972}, where 
    \eq{ 
        \mm Y = \begin{pmatrix} \mtxphantom
        \yy_1 & \yy_2 & \cdots & \yy_m 
        \end{pmatrix} 
    }
is a matrix whose columns are the vertices (including both sources and targets); $\xx^{\mm Y}$ is the vector of monomials whose $i$th component is $\xx^{\yy_i}$, and the \df{Kirchoff matrix} 
    \eq{ 
        [\mm A_{\vv\kk}]_{ij} = \left\{ 
        \begin{array}{cl}
            \kk_{ji} &  \text{if } \yy_i \to \yy_j \in E
            \\[5pt] \ds -\sum_r \kk_{jr} & \text{for } i = j
            \\[5pt] 0 & \text{otherwise} 
        \end{array} 
        \right. 
    }
is the negative transpose of the graph Laplacian of $(V,E,\vv\kk)$. In general, the $i$th component of $\mm A_{\vv\kk}\xx^{\mm Y}$ 
\eq{ 
    [\mm A_{\vv\kk} \xx^{\mm Y}]_i = \sum_{(j,i) \in E} \kk_{ji} \xx^{\yy_j} - \xx^{\yy_i} \!\! \sum_{(i,j) \in E} \kk_{ij}
}
measures the net flux passing through the $i$th vertex, so a complex-balanced steady state $\xx^*$ is a solution to the equation $\mm A_{\vv\kk} (\xx^*)^{\mm Y} = \vv 0$.

A subgraph $(V_0, E_0) \subseteq (V,E)$ is a \df{terminal strongly connected component} if it is strongly connected, and there does not exist an edge in $E$ from a vertex in $V_0$ to a vertex in $V \setminus V_0$. The kernel of $\mm A_{\vv\kk}$ is supported on the terminal strongly connected components: 

\begin{thm}[\cite{FeinbergHorn1977}]
\label{thm:kerAk}
Let $\mm A_{\vv\kk}$ be the Kirchoff matrix of $(V,E,\vv\kk)$ with terminal strongly connected components $V_1$, $V_2,\ldots, V_t$. There exists a basis $\{\vv c_1, \vv c_2,\ldots, \vv c_t \}$ for $\ker \mm A_{\vv\kk}$ with $\vv c_p \in \rrp^n$ and
\eq{ 
    \left\{ 
    \begin{array}{cl}
         [\vv c_p]_i  > 0 & \text{ if } \yy_i \in V_p, \\[5pt]
         [\vv c_p]_i  = 0 & \text{ otherwise.}
    \end{array}
    \right. 
}
\end{thm}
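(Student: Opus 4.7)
The plan is to build an explicit basis of $\ker \mm A_{\vv\kk}$ using the Matrix-Tree Theorem, verify the claimed support and kernel membership by a direct flux-balance calculation, and then show by a block-rank count that these vectors span.

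\textbf{Construction and support.} For each terminal strongly connected component $V_p$ and each $\yy_i \in V_p$, I would set
\eq{
    [\vv c_p]_i = \sum_{T} \prod_{(r,s) \in T} \kk_{rs},
}
where the sum runs over all spanning in-arborescences $T$ of the subgraph induced on $V_p$ rooted at $\yy_i$ (i.e., every vertex of $V_p$ other than $\yy_i$ has exactly one outgoing edge in $T$, and every vertex can reach $\yy_i$ in $T$); and $[\vv c_p]_i = 0$ for $\yy_i \notin V_p$. Since $V_p$ is strongly connected, each choice of root admits at least one such arborescence, so $[\vv c_p]_i > 0$ for every $\yy_i \in V_p$. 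The supports of $\vv c_1, \ldots, \vv c_t$ are pairwise disjoint, so these vectors are automatically linearly independent.

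\textbf{Kernel membership.} Using the given identity
\eq{
    [\mm A_{\vv\kk} \vv c_p]_i = \sum_{(j,i) \in E} \kk_{ji} [\vv c_p]_j - [\vv c_p]_i \sum_{(i,j) \in E} \kk_{ij},
}
I would split into two cases. If $\yy_i \notin V_p$, then every $\yy_j$ with an edge $\yy_j \to \yy_i$ either lies outside $V_p$ (where $[\vv c_p]_j = 0$) or lies in $V_p$ (which contradicts terminality of $V_p$); the second sum vanishes because $[\vv c_p]_i = 0$. If $\yy_i \in V_p$, terminality forces both sums to be confined to $V_p$, and the identity reduces to the classical in-arborescence flux-balance relation, which is the combinatorial heart of Tutte's Matrix-Tree Theorem and may be checked by a weight-preserving bijection between the arborescence-plus-edge configurations contributing to the two sides.

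\textbf{Spanning.} To show $\dim \ker \mm A_{\vv\kk} \leq t$, I would partition the vertex set as $V = U \sqcup V_1 \sqcup \cdots \sqcup V_t$ with $U$ the non-terminal vertices, and prove that the principal submatrix of $\mm A_{\vv\kk}$ indexed by $U$ is nonsingular. This is the step I expect to be the main obstacle: every vertex of $U$ has a directed path that eventually exits $U$ into some $V_p$, and I would exploit this by ordering $U$ according to a topological sort of the condensation DAG and running a Gershgorin/diagonal-dominance argument so that strict dominance propagates along the order. Once the $U$-block is invertible, any $\vv v \in \ker \mm A_{\vv\kk}$ must vanish on $U$; its restriction to each $V_p$ then lies in the one-dimensional kernel of the Kirchoff matrix of a strongly connected graph, and hence is a scalar multiple of $\vv c_p|_{V_p}$, completing the proof.
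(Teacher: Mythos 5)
The paper does not prove this statement at all: it is quoted from Feinberg--Horn (1977) as a known result, with only the subsequent remark that the Matrix-Tree Theorem expresses the nonzero entries of $\vv c_p$ as maximal minors of $\mm A_{\vv\kk}$. Your proposal is therefore a self-contained reconstruction rather than an alternative to anything in the text, and it follows the standard route: the arborescence (Markov-chain-tree) construction of the kernel vectors, terminality to localize the flux balance, and a rank count on the non-terminal block. All three steps are sound. The in-arborescence convention you chose is the right one for this column-based Kirchoff matrix; positivity on $V_p$ does follow from strong connectivity; and the weight-preserving bijection you allude to is the correct one (both sides of the balance at $\yy_i$ enumerate out-degree-one functional digraphs on $V_p$ whose unique cycle passes through $\yy_i$, matched by exchanging ``arborescence rooted at $j$ plus edge $(j,i)$'' with ``arborescence rooted at $i$ plus edge $(i,j)$'').

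Two points in the spanning step should be made explicit. First, the inference ``$A_{UU}$ invertible $\Rightarrow \vv v$ vanishes on $U$'' uses that the block of $\mm A_{\vv\kk}$ with rows in $U$ and columns in $V_1\cup\dots\cup V_t$ is zero; this is again terminality (no edges leave any $V_p$), and deserves a sentence since it is what makes the matrix block-triangular in your ordering. Second, you assert that the Kirchoff matrix of a single strongly connected component has a kernel of dimension exactly one; the lower bound is your $\vv c_p$, but the upper bound is logically the same kind of claim as the invertibility of $A_{UU}$ and should be handled by the same lemma. The clean statement to prove is: a weakly column-diagonally-dominant matrix in which every index has a path, through nonzero off-diagonal entries, to a strictly dominant column is nonsingular. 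Applied to $A_{UU}$ (strict dominance occurs exactly in columns of vertices with an edge exiting $U$, and every vertex of $U$ reaches one), this gives your main step; applied to $A_{V_pV_p}$ with one row and column deleted, it gives corank at most one for each terminal component. Note that plain Gershgorin is not quite enough here, since the discs of a weakly dominant column touch the origin; your phrase about strict dominance ``propagating along the order'' is exactly the chained-dominance refinement needed, so the plan closes, but as written it is a plan rather than a proof.
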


According to the Matrix-Tree Theorem~\cite{CraciunDickensteinShiuSturmfels2009, horn1972necessary}, there is an explicit formula for the entries of $\vv c_p$. Each non-zero $[\vv c_p]_i$ is a polynomial of $\kk_{ij}$ with positive coefficients, given by the maximal minors of $\mm A_{\vv\kk}$~\cite{CraciunDickensteinShiuSturmfels2009, StanleyFomin1999_MtxTree, MirzaevGunawardena2013}.

If $(V,E)$ is weakly reversible, then $\delta =  \dim(\ker \mm Y \cap \ran \mm A_{\vv\kk})$. More generally, $\dim(\ker \mm Y \cap \ran \mm A_{\vv\kk}) = |V| - \ell - t$, when $(V,E,\vv\kk)$ has $t$ terminal strongly connected components~\cite{FeinbergHorn1977}. 
Therefore if $(V,E)$ is \WRz, then $\ker (\mm Y \mm A_{\vv\kk}) = \ker \mm A_{\vv\kk}$, and the matrix of net direction vectors $\mm W$ coincides with $\mm Y \mm A_{\vv\kk}$ (see \Cref{lem:mtx}). 

For the purpose of this work, we assume that we are given $\mm W$ and the matrix of source vertices $\mm Y_s$, but we do not know the decomposition of $\mm W$ into the product $\mm Y \mm A_{\vv\kk}$, where the columns of $\mm Y_s$ are also columns of $\mm Y$. Because $\ker \mm A_{\vv\kk}$ is well characterized~\cite{FeinbergHorn1977, GunaNts, feinberg2019foundations}, we make use of it in our search for \WRz\ realizations.

\begin{figure}[t!]
	\centering
	\begin{tikzpicture}
        \draw [step=1, gray!50!white, thin] (0,0) grid (4.75, 3.5);
        \node at (0,3.5) {};
		\node at (0,-0.25) {};
            \draw [->, gray] (0,0)--(4.75,0);
            \draw [->, gray] (0,0)--(0,3.5); 
        \draw [rounded corners, very thick, viridisturq, fill = viridisyellow, fill opacity=0.2] (-0.75,1.75) rectangle (2.75,3.25); 
        \draw [rounded corners, very thick, viridisturq, fill = viridisyellow, fill opacity=0.2] (-0.6,-0.5) rectangle (2.3,1.5);
        \draw [rounded corners, very thick, viridisturq, fill = viridisyellow, fill opacity=0.2] (3.4,-0.5) rectangle (4.6,2.5); 
            \node [inner sep=0pt, outer sep=1.2pt, blue] (00) at (0,0) {$\bullet$};
            \node [inner sep=0pt, blue] (x) at (1,0) {$\bullet$};
            \node [inner sep=0pt, blue] (2xy) at (2,1) {$\bullet$};
            \node [inner sep=0pt, blue] (4x2y) at (4,2) {$\bullet$};
            \node [inner sep=0pt, blue] (y) at (0,2) {$\bullet$};
            \node [inner sep=0pt, blue] (x2y) at (2,3) {$\bullet$};
            \node [inner sep=0pt, blue] (3xy) at (3,1) {$\bullet$};
            \node [inner sep=0pt, blue] (4x) at (4,0) {$\bullet$};
            \draw [-{stealth}, thick, blue, transform canvas={xshift=-0.05ex, yshift=0.25ex}] (y)--(x2y) node [midway, above] {\ratecnst{$\kk_{12}$}};
            \draw [-{stealth}, thick, blue, transform canvas={xshift=0.05ex, yshift=-0.25ex}] (x2y)--(y) node [midway, below] {\ratecnst{$\kk_{21}$}};
            \draw [-{stealth}, thick, blue, transform canvas={xshift=-0.25ex, yshift=0ex}] (4x)--(4x2y) node [midway, left] {\ratecnst{$\kk_{34}$\!\!}};
            \draw [-{stealth}, thick, blue, transform canvas={xshift=0.25ex, yshift=0ex}] (4x2y)--(4x) node [midway, right] {\ratecnst{\!\!$\kk_{43}$}};
            \draw [-{stealth}, thick, blue, transform canvas={xshift=0ex, yshift=0ex}] (00)--(x) node [midway, below] {\ratecnst{$\kk_{56}$}};
            \draw [-{stealth}, thick, blue, transform canvas={xshift=0ex, yshift=0ex}] (x)--(2xy) node [midway, below right] {\ratecnst{\!\!$\kk_{67}$}};
            \draw [-{stealth}, thick, blue, transform canvas={xshift=0ex, yshift=0ex}] (2xy)--(00) node [midway, above] {\ratecnst{$\kk_{75}$}};
            \draw [-{stealth}, thick, blue, transform canvas={xshift=0ex, yshift=0ex}] (3xy)--(2xy) node [midway, above] {\ratecnst{\,\,\,\,$\kk_{87}$}};
            \draw [-{stealth}, thick, blue, transform canvas={xshift=0ex, yshift=0ex}] (3xy)--(4x2y) node [midway, left] {\ratecnst{$\kk_{84}$}};
            \node at (y) [left] {\footnotesize $\yy_1$};
            \node at (x2y) [right] {\footnotesize $\yy_2$};
            \node at (4x) [below] {\footnotesize $\yy_3$};
            \node at (4x2y) [above] {\footnotesize $\yy_4$};
            \node at (00) [below left] {\footnotesize $\yy_5$};
            \node at (x) [below right] {\footnotesize $\yy_6$};
            \node at (2xy) [above] {\footnotesize $\yy_7$};
            \node at (3xy) [below] {\footnotesize $\yy_8$};    
	\end{tikzpicture}
	\caption{A weighted E-graph with two connected components but three terminal strongly connected components (boxed). Its Kirchoff matrix $\mm A_{\vv\kk}$ and a basis for $\ker \mm A_{\vv\kk}$ are given in \Cref{ex:kerAk}.}
	\label{fig:ex-kerA}
\end{figure}

\begin{ex}
\label{ex:kerAk}
Consider the weighted E-graph $(V,E,\vv\kk)$ in \Cref{fig:ex-kerA}. While $(V,E)$ has two connected components, it has three terminal strongly connected components (boxed in \Cref{fig:ex-kerA}). With the ordering of vertices as labelled in the figure, the Kirchoff matrix of $(V,E,\vv\kk)$ is 
    \eq{ 
        \mm A_{\vv\kk} = 
        \begin{pmatrix}
            - \kk_{12} & \hphantom{-}\kk_{21} \\
            \hphantom{-}\kk_{12} & -\kk_{21} \\ 
            && -\kk_{34} & \hphantom{-}\kk_{43} \\
            && \hphantom{-}\kk_{34} & -\kk_{43} &&&& \kk_{84} \\
            &&&& -\kk_{56} & \hphantom{-}0 & \hphantom{-}\kk_{75} \\
            &&&& \hphantom{-}\kk_{56} & -\kk_{67} & \hphantom{-}0 \\
            &&&& \hphantom{-}0 & \hphantom{-}\kk_{67} & -\kk_{75} & \kk_{87}  \\
            &&&&&&& -\kk_{84} -\kk_{87} 
        \end{pmatrix}. 
    }
A basis for its kernel is given by the vectors 
    \eq{ 
        \vv c_1 = \begin{pmatrix} \kk_{21} \\ \kk_{12}  \\ 0\\ 0 \\ 0\\0\\0\\0 \end{pmatrix} ,
        \qquad 
        \vv c_2 = \begin{pmatrix} 0 \\ 0  \\ \kk_{43}\\ \kk_{34} \\ 0\\0\\0\\0 \end{pmatrix} ,
        \qquad 
        \vv c_3 = \begin{pmatrix} 0\\0 \\ 0\\ 0 \\ \kk_{67}\kk_{75} \\ \kk_{56}\kk_{75} \\ \kk_{56}\kk_{67}\\0 \end{pmatrix} .
    }
The supports of the basis vectors $\vv c_p$ are precisely the terminal strongly connected components of $(V,E)$. If the graph is weakly reversible, then the basis of $\ker \mm A_{\vv\kk}$ given in \Cref{thm:kerAk} provides a way to partition the set of vertices. 
\end{ex}

\section{Main results}
\label{sec:main-result}

In this section, we present \Cref{algorithm} (see also \Cref{fig:WRzAlg}) that searches for a weakly reversible and deficiency zero (\WRz) realization of a given system of polynomial differential equations 
\eqn{\label{eq:alg-intro} 
    \frac{d\xx}{dt} = \sum_{i=1}^m \xx^{\yy_i} \vv w_i, 
}
where $\yy_1,\ldots, \yy_m \in \zzp^n$ are distinct, and $\vv w_1$, $\vv w_2,\ldots, \vv w_m \in \rr^n\setminus\{\vv 0\}$. Whenever \eqref{eq:alg-intro} admits a \WRz\ realization, the system is complex-balanced and enjoys all the properties listed in \Cref{thm:HJ}.  Moreover, if no \WRz\ realization exists for \eqref{eq:alg-intro}, our algorithm would conclude as much. Whenever a \WRz\ realization exists, the set of positive steady states has a log-linear structure that allows us to easily find the steady states of \eqref{eq:alg-intro}, as outlined in \Cref{thm:eqm}. Finally, our algorithm is valid even if the $\vv w_i$'s are only known up to a positive scalar multiple; we prove this in \Cref{thm:vark}.

\subsection{Algorithm for \WRz\ realization}
\label{sec:alg-pf}

\tikzset{%
    Node/.style={rectangle, rounded corners, draw=black, thick, fill=blue!10, fill opacity = 1, minimum width=4.5cm, minimum height=1cm, outer sep=0pt},
    Edge/.style={very thick, style={very thick, arrows={-Stealth[length=7.5pt,width=7.5pt]}}},
} 
\begin{figure}[p]
\centering
    \begin{tikzpicture}
    \node[Node, fill=yellow!70, minimum width=5cm, minimum height=1.5cm] (input) at (2.2,-0.25) {}; 
        \node at (0.1,0) [right] {\small $\mm Y_s = \begin{pmatrix} \yy_1 ,  \ldots ,  \yy_m\end{pmatrix} \in  \zzp^{n\times m}$};
        \node at (0,-0.5) [right] {\small $\mm W = \begin{pmatrix} \vv w_1 ,  \ldots ,  \vv w_m\end{pmatrix} \in \rr^{n\times m}$};
    \node[Node, minimum height=2cm, minimum width=5.75cm] (genCone) at (9,-0.25) {};
        \node at (9,0.25) {\small Find minimal set of generators};
        \node at (9,-0.25) {\small $\{\vv c_1,\ldots, \vv c_\ell\} \subset \rrp^m$};
        \node at (9, -0.75) {\small for the cone $\ker \mm W \cap \rrp^m$};
    \draw[Edge] (input)--(genCone) ;
    \node[Node,  minimum width=5.75cm, minimum height=1.5cm] (partition) at (9,-2.75) {};
        \node at (9,-2.5) {\small Does $\{ \mrm{supp}(\vv c_p)\}_{p=1}^\ell$};
        \node at (9,-3) {\small partition $[1,m] \subseteq \nn$?};
    \draw[Edge] (genCone)--(partition);
    \node[Node,  minimum width=5.75cm, fill=orange!30] (fail1) at (2.2,-2.75) {\small No \WRz\ realization};
    \draw[Edge] (partition)node[xshift=-3.3cm, above] {\small no}--(fail1);
    \node[Node, minimum width=5.75cm, minimum height=1.5cm] (linkage) at (9,-5.25) {};
        \node at (9,-5) {\small Define candidate connected};
        \node at (9,-5.5) {\small components by $V_p \coloneqq \mrm{supp}(\vv c_p)$};
    \draw[Edge] (partition)--(linkage) node[midway, right] {\small yes};
    \node[Node,  minimum width=5.75cm, minimum height=1.5cm] (aff) at (9,-8.25) {};
        \node at (9,-8) {\small Is $\{\yy_i \colon i \in V_p\}$};
        \node at (9,-8.5) {\small affinely independent?};
    \draw[Edge] (linkage)--(aff) node [midway, left] {\small \ttfamily\bfseries \blue{for} p = 1,2,...,l \,};
    \draw[Edge] (aff)node[xshift=-3.3cm, above] {\small no}--(3.5,-8.25) --(3.5, -3.25)  ;
    \node[Node, minimum height=1.5cm, minimum width=5.75cm] (lindecomp) at (9,-10.75) {};
        \node at (9,-10.5) {\small $\forall \, i \in V_p$, is };
        \node at (9,-11) {\small $\vv w_i \in \mrm{Cone}\{\yy_j - \yy_i \colon j \in V_p\}$?};
    \draw[Edge] (aff)--(lindecomp) node [midway, right] {\small yes}; 
    
    \node[Node, minimum height=2.1cm, minimum width=5.75cm] (WR) at (9,-13.5) {};
        \node at (9,-13) {\small Uniquely decompose each $\vv w_i$};
        \node at (9,-13.1) [below] {\small as $\ds \vv w_i = \sum_{\substack{j\neq i\\j \in V_p}} \kk_{ij} (\yy_j - \yy_i) $};
    \draw[Edge] (lindecomp)--(WR) node [right, midway] {\small yes};
    \draw[Edge] (lindecomp)node[xshift=-3.3cm, above] {\small no}--(2.2,-10.75) --(2.2, -3.25)  ;
    \draw[Edge] (WR) to node[right] {\small } (9,-15.25) 
        to (13,-15.25)
        to 
        (13,-6.75)--(9, -6.75); 
    \node[Node,  minimum width=5.75cm, fill=green!30] (success) at (9, -16.75) {\small \WRz\ realization found};
    \draw[Edge] (9,-15.25)--(success) ; 
    \node at (9,-15.25) [left] {\small \ttfamily\bfseries \blue{endfor}\,};
\end{tikzpicture}  
\caption{\Cref{algorithm} for finding \WRz\ realization of a polynomial dynamical system $\dot{\xx} = \sum_{i=1}^m \xx^{\yy_i} \vv w_i$. 
}
\label{fig:WRzAlg} 
\end{figure}

The inputs of \Cref{algorithm} are the source vertices and their net direction vectors via $\mm Y_s$ and $\mm W$ respectively. To find a \WRz\ realization $(V,E,\vv\kk)$ is to find a matrix decomposition of $\mm W = \mm Y_s \mm A_{\vv\kk}$, where $\mm A_{\vv\kk}$ encodes the graph structure of $(V,E)$. In the following lemma, we prove properties that can be expected should a \WRz\ realization exists. 

Recall that a set $X$ is a \df{polyhedral cone} if $X = \{ \vv x \st \mm M \vv x \leq \vv 0\}$ for some matrix $\mm M$. Such a cone is convex. It is \df{pointed}, or \df{strongly convex}, if it does not contain a positive dimensional linear subspace. Note that a cone contained in the positive orthant $\rrp^m$ is always pointed. A pointed polyhedral cone admits a unique (up to scalar multiple) minimal set of generators~\cite{CoxLittleSchenck2011}.

\begin{algorithm}[H]
\caption{(\WRz\ algorithm)}
\label{algorithm}
\begin{algorithmic}[1]
\REQUIRE Matrices  $\mm Y_s = \begin{pmatrix}  \yy_1,\ldots, \yy_m \end{pmatrix} $ and  $\mm W = \begin{pmatrix} \vv w_1,\ldots, \vv w_m \end{pmatrix}$ that define  $\dot{\xx} = \sum_{i=1}^m \xx^{\yy_i} \vv w_i$. 

\phantom{ }

\ENSURE Either return the unique \WRz\ realization, or print that such a realization does not exist.

\phantom{} 

\STATE Find the minimal set of generators
        $\{\vv c_1, \vv c_2, \ldots, \vv c_\ell\}$ of the pointed convex cone  $\ker \mm W \cap \rrp^m$.
        
\IF{the sets $\mrm{supp}(\vv c_1), \mrm{supp}(\vv c_2), \dots, \mrm{supp}(\vv c_\ell)$ do \underline{not} form a partition of  $\{1,2, \dots, m\}$} 
    \STATE {\bf Print:} {\tt WR0 realization does not exist.} {\bf Exit.} \label{alg:exit1}
\ELSE 
    \STATE Define the candidate connected components to be $V_p \coloneqq \mrm{supp}(\vv c_p)$, for $p = 1, 2, \dots, \ell.$
    
    \FOR{$p = 1, 2, \dots, \ell$}
    \IF{ the vectors $\{\yy_i \colon i \in V_p\}$ are \underline{not} affinely independent} \label{alg:if-affine}
        \STATE {\bf Print:} {\tt WR0 realization does not exist.} {\bf Exit.} \label{alg:exit2}
    \ELSE 
        \FOR{each $i \in V_p$}
            \IF{$\vv w_i \notin \mrm{Cone}\{\yy_j - \yy_i \colon j \in V_p\}$}  \label{alg:if-cone}
                \STATE {\bf Print:} {\tt WR0 realization does not exist.} {\bf Exit.} \label{alg:exit3}
            \ELSE
                \STATE {Uniquely decompose $\vv w_i = \sum_j \kk_{ij} (\yy_j - \yy_i)$ with $\kk_{ij} \geq 0$.} \label{alg:decompose-wi} 
                \STATE {Add $\{ \yy_i \to \yy_j \st \kk_{ij} > 0\}$ to edge set $E$.}
            \ENDIF 
        \ENDFOR 
    \ENDIF
    \ENDFOR 
\ENDIF

\STATE {\bf Print:} {\tt The WR0 realization does exist, and has $\ell$ connected components.} 

\STATE {\bf Print:} {\tt The connected components of this realization are given by $\{V_p\}_{p=1}^\ell$.}  \label{alg:WR0-V}

\STATE {\bf Print:} {\tt The edges are given by $E$, with weights $\kk_{ij}$.
    }\label{alg:WR0-E}
\end{algorithmic}
\end{algorithm}

\begin{lem}
\label{lem:mtx} 
    Suppose a polynomial dynamical system $\dot{\xx} = \vv f(\xx)$ admits a \WRz\ realization $(V,E,\vv\kk)$ with $\ell$ connected components. Let $\mm Y_s$ be the matrix of source vertices, and $\mm W$ the matrix of net direction vectors of the polynomial dynamical system $\dot{\xx} = \vv f(\xx)$. Let $S$ be the associated linear space, and $\mm A_{\vv\kk}$ the Kirchoff matrix of the weighted E-graph $(V,E,\vv\kk)$. Then we have: 
    \begin{enumerate}[label={(\roman*)}]
    \item $\mm W = \mm Y_s \mm A_{\vv\kk}$ and $\ker \mm W = \ker \mm A_{\vv\kk}$, 
    \item $S = \ran \mm W$, and the rank of $\mm W$ is $|V|-\ell$, 
    \item $\ker \mm W \cap \rrp^m$ is a pointed polyhedral cone, and  
    \item a minimal set of generators for $\ker \mm W \cap \rrp^m$ has $\ell$ elements, whose supports correspond to the connected components of $(V,E)$. \label{lem:mtx-gen}
    \end{enumerate}
\end{lem}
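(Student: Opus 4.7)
The plan is to reduce every part to the matrix identity in (i) together with facts already recorded in \Cref{sec:intro-CB}. First, since $(V,E,\vv\kk)$ is weakly reversible, every vertex of $V$ is a source; combined with the assumption that every $\vv w_i$ in the polynomial system is nonzero (and the geometric consequence of $\delta = 0$ that no source in a \WRz\ network can have zero net direction vector, since the vertices of each connected component are affinely independent), this identifies $V = \{\yy_1,\ldots,\yy_m\}$, so $\mm Y = \mm Y_s$. Computing the $i$th column of $\mm Y_s \mm A_{\vv\kk}$ directly from the Kirchoff-matrix definition gives $\sum_j \kk_{ij}(\yy_j - \yy_i)$, which is $\vv w_i$ by \Cref{def:directvect}, establishing $\mm W = \mm Y_s \mm A_{\vv\kk}$. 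The inclusion $\ker \mm A_{\vv\kk} \subseteq \ker \mm W$ is then immediate. For the reverse inclusion, I would invoke the identity $\dim(\ker \mm Y_s \cap \ran \mm A_{\vv\kk}) = \delta$ from the paragraph following \Cref{thm:kerAk}: if $\vv v \in \ker \mm W$, then $\mm A_{\vv\kk}\vv v \in \ker \mm Y_s \cap \ran \mm A_{\vv\kk}$, which is trivial because $\delta = 0$.

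Part (ii) is then a dimension count. Each column of $\mm W$ is a linear combination of edge vectors, so $\ran \mm W \subseteq S$. By \Cref{thm:kerAk} applied to a weakly reversible graph (where every connected component is its own terminal strongly connected component), $\dim \ker \mm A_{\vv\kk} = \ell$; combined with (i) and rank-nullity this yields $\rank \mm W = |V| - \ell$. The definition of deficiency together with $\delta = 0$ gives $\dim S = |V| - \ell$, forcing the inclusion to be an equality. Part (iii) is a one-liner: $\ker \mm W \cap \rrp^m$ is the intersection of a linear subspace with the polyhedral cone $\rrp^m$, hence polyhedral, and it is pointed because $\rrp^m$ contains no nonzero linear subspace. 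For (iv), \Cref{thm:kerAk} produces a basis $\{\vv c_1,\ldots,\vv c_\ell\}$ of $\ker \mm A_{\vv\kk} = \ker \mm W$ whose supports are exactly the connected components $V_1,\ldots,V_\ell$; for any $\vv v \in \ker \mm W \cap \rrp^m$, expanding $\vv v = \sum_p \alpha_p \vv c_p$ and restricting to an index in $V_p$ (where only $\vv c_p$ contributes, and strictly positively) forces $\alpha_p \geq 0$, so the $\vv c_p$ generate the cone; minimality then follows from the disjointness of supports, which prevents any $\vv c_p$ from being a non-negative combination of the others.

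I expect the one genuine obstacle to be the equality $\ker \mm W = \ker \mm A_{\vv\kk}$ in (i): this is the unique step that truly consumes the deficiency-zero hypothesis, through the intersection formula $\delta = \dim(\ker \mm Y \cap \ran \mm A_{\vv\kk})$. Every other step is essentially bookkeeping around \Cref{thm:kerAk}, the definition of $\delta$, and elementary convex geometry.
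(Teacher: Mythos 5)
Your proposal is correct and follows essentially the same route as the paper's proof: the identity $\mm W = \mm Y_s \mm A_{\vv\kk}$, the kernel equality via $\delta = \dim(\ker \mm Y_s \cap \ran \mm A_{\vv\kk}) = 0$, the rank count through \Cref{thm:kerAk} and the definition of deficiency, and the generator/minimality argument from the disjoint supports of the $\vv c_p$. The only cosmetic differences are that you verify $\mm W = \mm Y_s \mm A_{\vv\kk}$ by a direct column computation rather than by dynamical equivalence and uniqueness of polynomial coefficients, and you justify the non-vanishing of the net direction vectors geometrically (via affine independence of each component) where the paper cites an external proposition; both are sound.
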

\begin{proof}
Because $(V,E)$ is weakly reversible, all vertices in $V$ are sources. Moreover, because the deficiency of $(V,E)$ is zero, by \cite[Proposition 3.5]{CraciunJinYu_unique}, the net direction vector from any $\yy_i$ is non-zero, and the set of source vertices corresponds exactly to the columns of $\mm Y_s$. 
\begin{enumerate}[label={(\roman*)}]
\item  By definition, $\vv f(\xx) = \mm W \xx^{\mm Y_s}$, and by dynamical equivalence, $\vv f(\xx) = \mm Y_s \mm A_{\vv\kk} \xx^{\mm Y_s}$. Since the coefficients of polynomial functions are uniquely determined, $\mm W = \mm Y_s \mm A_{\vv\kk}$. Because  $\dim (\ker \mm Y_s \cap \ran \mm A_{\vv\kk} ) = \delta = 0$, we have $\ker \mm W = \ker \mm A_{\vv\kk}$. 

\item Note that 
\eq{ 
    \rank \mm W = \rank \mm A_{\vv\kk} = |V| - \ell = \dim S, 
}
where the first and last equalities follow from $0 = \delta  = \dim (\ker \mm Y_s \cap \ran \mm A_{\vv\kk}) = |V| - \ell - \dim S$, and the second equality follows from weak reversibility and \Cref{thm:kerAk}. Clearly $\ran \mm W \subseteq S$, so  $\ran \mm W = S$.

\item  The set $\ker \mm W \cap \rrp^m$ is the solution to $\mm W \vv \nu  \geq \vv 0$, $- \mm W \vv \nu \geq \vv 0$, and $\mm{Id} \, \vv \nu \geq \vv 0$; thus the set is a polyhedral cone. That $\ker \mm W \cap \rrp^m$ is pointed follows from it being a subset of $\rrp^m$. 

\item Let $\mc B = \{\vv c_1$, $\vv c_2, \ldots, \vv c_\ell\}$ be a basis of $\ker \mm A_{\vv\kk}$ as in \Cref{thm:kerAk}, where $\vv c_p \geq \vv 0$, and each $V_p = \{ \yy_i \st i \in \mrm{supp}(\vv c_p)\}$ is a connected component of $(V,E)$. Clearly $\mc B \subseteq \ker \mm W \cap \rrp^m$; we claim that $\mc B$ is a \emph{minimal} set of \emph{generators} for the pointed cone. 

Let $\vv \nu \in \ker \mm W \cap \rrp^m$ be arbitrary. By (ii), $\mc B$ is a basis for $\ker \mm W$, so decompose $\vv \nu$ accordingly: 
    \eq{ 
        \vv \nu = \sum_{p=1}^\ell \lambda_p \vv c_p, 
    }
for some $\lambda_p \in \rr$.
By \Cref{thm:kerAk}, each $\vv c_p$ is supported on the connected components of $(V,E)$, which partition the set of vertices. In particular, for each $i = 1,\ldots, m$, there is exactly one $p(i)$ such that $\nu_i = \lambda_{p(i)} [\vv c_{p(i)}]_i$. Since $\vv \nu$, $\vv c_{p(i)} \in \rrp^m$, it must be the case that $\lambda_{p(i)} \geq 0$. In other words, $\mc B$ generates the cone $\ker \mm W \cap \rrp^m$. Because the vectors in $\mc B$ have disjoint supports, $\mc B$ is minimal.  \qedhere
\end{enumerate}
\end{proof}

\begin{lem}
\label{claim1}
    If \Cref{algorithm} exits at lines~\ref{alg:exit1}, \ref{alg:exit2}, or \ref{alg:exit3}, then $\dot{\xx} = \sum_{i=1}^m \xx^{\yy_i} \vv w_i$ does not admit a \WRz\ realization. 
\end{lem}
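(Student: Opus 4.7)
The plan is to prove the contrapositive: assuming $\dot{\xx} = \sum_{i=1}^m \xx^{\yy_i} \vv w_i$ admits a \WRz\ realization $(V,E,\vv\kk)$ with connected components $V_1,\ldots,V_\ell$, I would show that none of the three exit conditions can be triggered. The heavy lifting has already been done in \Cref{lem:mtx}: it tells us that $\ker\mm W = \ker\mm A_{\vv\kk}$, that this cone is pointed, and, by part~\ref{lem:mtx-gen}, that a minimal set of generators of $\ker\mm W\cap\rrp^m$ is provided (up to positive scalar multiples) by the basis $\{\vv c_1,\ldots,\vv c_\ell\}$ of \Cref{thm:kerAk}, whose supports are exactly the connected components of the realization.

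For the exit at line~\ref{alg:exit1}, I invoke the uniqueness (up to positive scalar multiples) of a minimal set of generators of a pointed polyhedral cone. This forces the supports $\mrm{supp}(\vv c_1),\ldots,\mrm{supp}(\vv c_\ell)$ computed in line~1 of \Cref{algorithm} to coincide (after reindexing) with the connected components $V_1,\ldots,V_\ell$ of the hypothesized realization. Since connected components partition the vertex set, the supports form a partition of $\{1,\ldots,m\}$, and line~\ref{alg:exit1} is not reached.

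For the exit at line~\ref{alg:exit2}, I use the geometric characterization recalled in \Cref{sec:intro-CB}: $\delta = 0$ implies that each connected component has affinely independent vertices. Having identified each $V_p$ with a genuine connected component of the realization in the previous step, this gives affine independence of $\{\yy_i : i \in V_p\}$ for every $p$. For the exit at line~\ref{alg:exit3}, weak reversibility guarantees that every edge out of a source $\yy_i$ terminates within the same connected component $V_p$ containing $\yy_i$, so the net direction vector
\[
    \vv w_i \;=\; \sum_{\yy_i \to \yy_j \,\in\, E} \kk_{ij}\,(\yy_j - \yy_i)
\]
is automatically a non-negative combination of $\{\yy_j - \yy_i : j \in V_p,\ j \neq i\}$, hence lies in $\mrm{Cone}\{\yy_j - \yy_i : j \in V_p\}$.

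The principal subtlety is the identification between the algorithmically produced supports at line~1 and the connected components of the hypothesized \WRz\ realization; once this identification is secured via \Cref{lem:mtx}\ref{lem:mtx-gen} and the uniqueness of minimal cone generators, the remaining two cases follow directly from the definition of weak reversibility and the geometric interpretation of deficiency zero.
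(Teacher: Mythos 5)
Your proof is correct and follows essentially the same route as the paper: the paper argues each exit case directly via the contrapositive of \Cref{lem:mtx}\ref{lem:mtx-gen}, the affine-independence characterization of deficiency zero, and the decomposition of $\vv w_i$ into edge vectors within a connected component, which are exactly your three ingredients stated in contrapositive form. The one point you handle more carefully than the paper is making explicit the appeal to uniqueness of the minimal generating set of a pointed cone to identify the algorithm's supports with the realization's connected components; the paper leaves this identification implicit in its citation of \Cref{lem:mtx}\ref{lem:mtx-gen}.
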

\begin{proof}
    If the algorithm exits at line~\ref{alg:exit1}, then by the contrapositive of \Cref{lem:mtx}\ref{lem:mtx-gen} no \WRz\ realization exists. Continuing with the algorithm, let $\{ \vv c_1,\ldots, \vv c_\ell\}$ be a minimal set of generators of $\ker \mm W \cap \rrp^n$, and partition the vertices as $V_p \coloneqq \mrm{supp}(\vv c_p)$.  
    If instead the algorithm exits at line~\ref{alg:exit2}, then again no \WRz\ realization exists because \WRz\ realizations have affinely independent connected components~\cite[Theorem 9]{CraciunJohnstonSzederkenyiTonelloTothYu2020}. 
    Finally, exiting at line~\ref{alg:exit3} means that some net direction vector $\vv w_i$ cannot be decomposed as edges from $\yy_i$ to other vertices in $V_p$, which defines a connected component of a \WRz\ realization if it exists according to \Cref{lem:mtx}\ref{lem:mtx-gen}. 
\end{proof}

\begin{lem}
\label{claim2a}
    Suppose \Cref{algorithm} reaches line~\ref{alg:WR0-E}. Then the connected components of $(V,E)$ are given by $V_1$, $V_2,\ldots, V_\ell$.
\end{lem}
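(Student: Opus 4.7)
The plan is to verify two geometric facts: that all edges added by the algorithm stay within a single $V_p$, and that each $V_p$ is a single terminal strongly connected component of $(V,E)$, which combined imply each $V_p$ is a single connected component.

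First I would note that if the algorithm reaches line~\ref{alg:WR0-E}, then for every $p$ and every $i \in V_p$ we have a decomposition $\vv w_i = \sum_{j \in V_p} \kk_{ij}(\yy_j - \yy_i)$ with $\kk_{ij} \geq 0$, and the edge $\yy_i \to \yy_j$ is added precisely when $\kk_{ij} > 0$. Consequently every edge of $E$ joins two vertices inside the same $V_p$, so the Kirchoff matrix $\mm A_{\vv\kk}$ of the realization is block diagonal with one block per $V_p$. A direct column-by-column computation, parallel to the proof of \Cref{lem:mtx}, yields $\mm W = \mm Y_s \mm A_{\vv\kk}$, and in particular $\ker \mm A_{\vv\kk} \subseteq \ker \mm W$.

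Next I would show $\vv c_p \in \ker \mm A_{\vv\kk}$. Set $\vv a = \mm A_{\vv\kk} \vv c_p$. The block-diagonal structure together with $\mrm{supp}(\vv c_p) = V_p$ forces $\vv a$ to be supported on $V_p$ as well. Since $\vv c_p \in \ker \mm W$, we have $\mm Y_s \vv a = \vv 0$; and since each column of $\mm A_{\vv\kk}$ sums to zero, we also have $\sum_j a_j = 0$. Together these express an affine dependence among $\{\yy_j : j \in V_p\}$, which is ruled out by the affine-independence test passed at line~\ref{alg:if-affine}. Hence $\vv a = \vv 0$, i.e., $\vv c_p \in \ker \mm A_{\vv\kk}$.

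The conclusion then follows from \Cref{thm:kerAk}: a basis of $\ker \mm A_{\vv\kk}$ is supported on the terminal strongly connected components (TSCCs) of $(V,E)$, and because no edge leaves $V_p$, each TSCC is contained in some $V_p$. Since $\vv c_p$ is strictly positive on all of $V_p$ and decomposes in this basis with non-negative coefficients, $V_p$ must be a disjoint union of one or more TSCCs, each appearing with positive coefficient. If $V_p$ were split into $k \geq 2$ TSCCs $T_1,\ldots, T_k$, the corresponding basis vectors $\vv c_{T_q}$ would lie in $\ker \mm A_{\vv\kk} \cap \rrp^m \subseteq \ker \mm W \cap \rrp^m$ and express $\vv c_p$ as a positive combination of cone vectors that are not scalar multiples of $\vv c_p$, contradicting the extremality of $\vv c_p$ in the minimal generating set of the pointed cone. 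Hence each $V_p$ is a single TSCC; in particular it is strongly connected, and since no edge connects distinct $V_p$'s, it is precisely one connected component of $(V,E)$.

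The main obstacle I anticipate is the middle step: passing from $\vv c_p \in \ker \mm W$ to $\vv c_p \in \ker \mm A_{\vv\kk}$, which is exactly where the affine-independence test of line~\ref{alg:if-affine} earns its keep and allows \Cref{thm:kerAk} to be invoked.
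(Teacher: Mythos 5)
Your proof is correct, but it takes a genuinely different route from the paper's. The paper argues by contradiction at the level of the subspaces $S(\cdot)$: assuming some $V^*\subsetneq V_p$ were a connected component, affine independence of $V_p$ forces $S(V^*)$ and $S(V_p\setminus V^*)$ to be linearly independent, so the relation $\sum_{i\in V_p} c_i \vv w_i = \vv 0$ splits into two separate elements of $\ker\mm W\cap\rrp^m$ with smaller supports, contradicting minimality of the generating set. You instead pass through the Kirchoff matrix: your middle step, deducing $\vv c_p\in\ker\mm A_{\vv\kk}$ from $\vv c_p\in\ker\mm W$ by noting that $\vv a=\mm A_{\vv\kk}\vv c_p$ is supported on $V_p$, satisfies $\mm Y_s\vv a=\vv 0$ and $\sum_j a_j=0$, and hence encodes an affine dependence killed by the test at line~\ref{alg:if-affine}, is a clean reformulation of the deficiency-zero identity $\ker(\mm Y_s\mm A_{\vv\kk})=\ker\mm A_{\vv\kk}$, and it lets you invoke \Cref{thm:kerAk} directly. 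The extremality argument at the end (a generator of a pointed cone cannot be a positive combination of cone elements with strictly smaller disjoint supports) plays the same role as the paper's final contradiction. What your route buys is that it proves more: you conclude each $V_p$ is a single \emph{terminal strongly connected} component, which simultaneously establishes \Cref{claim2c} (weak reversibility), whereas the paper proves that separately by an essentially parallel computation on each block $\mm A_{\vv\kk}^{(p)}$. The one step you state somewhat tersely, that the coefficients of $\vv c_p$ in the basis of \Cref{thm:kerAk} are non-negative, does follow immediately from the disjointness of the basis vectors' supports (exactly as in the proof of \Cref{lem:mtx}(iv)), so there is no gap.
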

\begin{proof}
    If the algorithm reaches line~\ref{alg:WR0-E}, a realization has been found with edges among $V_1,\ldots V_\ell$, i.e., the connected components are subsets of $V_p$. We prove now that in fact, each $V_p$ is connected in $(V,E)$.
    
    For any $p=1,\ldots, \ell$, let $U \coloneqq V_p$ be the support of $\vv c \coloneqq \vv c_p$. Suppose for a contradiction that $V^* \subsetneq U$ is a connected component. Because the \textbf{if} statement in line~\ref{alg:if-affine} is false, $U$ is affinely independent, so the linear subspaces 
    \eq{ 
        S(V^*) = \{ \yy_i - \yy_j \st \yy_i, \yy_j \in V^*\}
        \quad \text{and} \quad 
        S(U \setminus V^*) = \{ \yy_i - \yy_j \st \yy_i, \yy_j \in U \setminus V^*\}
    }
    are linearly independent. Because $\vv c \in \ker \mm W \cap \rrp^m$ and $\mrm{supp}(\vv c) = U$, we have 
    \eqn{\label{eq:pf-connected-component} 
        \vv 0 & 
        = \sum_{i \in V^*} c_i \vv w_i + \!\! \sum_{i \in U \setminus V^*} \!\! c_i \vv w_i, 
    }
    with $c_i > 0$. Furthermore, the \textbf{if} statement in line~\ref{alg:if-cone} returning false implies that each $\vv w_i$ in \eqref{eq:pf-connected-component} can be further decomposed as edges between vertices in $U$. For any $\yy_i$ in $V^*$, which is a connected component, the net direction vector $\vv w_i$ is a positive linear combination of edge vectors between $\yy_i$ and other vertices in $V^*$, so $\vv w_i \in S(V^*)$. In particular, $\vv c^* \coloneqq \sum_{i \in V^*} c_i \vv w_i \in S(V^*)$. Similarly, any vertices in $U \setminus V^*$ are only connected to other vertices in $U\setminus V^*$.  Linear independence of $S(V^*)$ and $S(U\setminus V^*)$ means that the vectors $\vv c^* $ and $\vv c - \vv c^*$ are linearly independent. Both $\vv c^* $ and $\vv c - \vv c^*$  lie in the cone $\ker \mm W \cap \rrp^m$, so $\{ \vv c_1,\ldots, \vv c_\ell\}$ is not a set of generators, which is a contradiction. 
\end{proof}

\begin{lem}
\label{claim2b} 
    Suppose \Cref{algorithm} reaches line~\ref{alg:WR0-E}. Then the deficiency of $(V,E)$ is zero. 
\end{lem}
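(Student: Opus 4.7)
My plan is to compute the deficiency directly from its definition $\delta = |V| - \ell' - \dim S$. By \Cref{claim2a}, the graph $(V,E)$ has $\ell' = \ell$ connected components, and since $V$ consists of the source vertices $\yy_1, \ldots, \yy_m$ we have $|V| = m$. So the goal reduces to showing $\dim S = m - \ell$, which I will sandwich between matching upper and lower bounds.

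The upper bound $\dim S \leq m - \ell$ is the easy direction: since the algorithm reached line~\ref{alg:WR0-E} only by passing line~\ref{alg:if-affine}, each $V_p$ is affinely independent and $\dim S(V_p) = |V_p| - 1$. Combined with $S = S(V_1) + \cdots + S(V_\ell)$ (from \Cref{claim2a}), this yields $\dim S \leq \sum_{p=1}^\ell (|V_p| - 1) = m - \ell$.

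For the matching lower bound, I plan to show $\dim \ran \mm W = m - \ell$ and use $\ran \mm W \subseteq S$; the containment holds because each $\vv w_i$ (with $i \in V_p$) was decomposed in line~\ref{alg:decompose-wi} as a non-negative combination of $\{\yy_j - \yy_i : j \in V_p\}$, and hence lies in $S(V_p) \subseteq S$. By rank-nullity, this reduces to proving $\dim \ker \mm W = \ell$. The inequality $\dim \ker \mm W \geq \ell$ is immediate since $\vv c_1, \ldots, \vv c_\ell$ are linearly independent (pairwise disjoint supports). For the reverse inequality, given any $\vv \nu \in \ker \mm W$, I would pick $\lambda > 0$ large enough that $\vv \nu + \lambda \sum_{p=1}^\ell \vv c_p \in \rrp^m$; this is possible because the $V_p$'s partition $\{1, \ldots, m\}$, so $\sum_p \vv c_p$ is strictly positive in every coordinate. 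The shifted vector then lies in the cone $\ker \mm W \cap \rrp^m$, which by the first step of \Cref{algorithm} is generated by $\{\vv c_1, \ldots, \vv c_\ell\}$; writing the shifted vector as a non-negative combination of these generators and rearranging expresses $\vv \nu$ itself as a linear combination of the $\vv c_p$'s, giving $\dim \ker \mm W \leq \ell$.

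The main obstacle will be this last inequality, since a minimal generating set of a pointed polyhedral cone in $\rrp^m$ need not span its containing linear kernel in general. The argument succeeds here only because $\sum_p \vv c_p$ is coordinate-wise positive (as the supports form a partition), which lets any kernel element be shifted into the cone without being zeroed out anywhere. Once this inequality is in hand, combining with the upper bound yields $\dim S = m - \ell$, and the deficiency calculation $\delta = m - \ell - (m - \ell) = 0$ is immediate.
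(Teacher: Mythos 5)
Your proof is correct and takes essentially the same route as the paper's: both arguments show that $\{\vv c_1,\ldots,\vv c_\ell\}$ is a basis of $\ker \mm W$ via the same shifting-into-the-cone trick, deduce $\dim S \geq \rank \mm W = m-\ell$ from $\ran \mm W \subseteq S$, and conclude $\delta = 0$. The only cosmetic difference is that you derive the matching upper bound $\dim S \leq m-\ell$ explicitly from affine independence, whereas the paper invokes the general fact that $\delta \geq 0$; these are the same observation.
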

\begin{proof}
    The falsity of the \textbf{if} statement in line~\ref{alg:if-affine} and \Cref{claim2a} imply that the connected components $V_1$, $V_2,\ldots, V_\ell$ are affinely independent. To prove $\delta = 0$, it remains to show that $S(V_1)$, $S(V_2),\ldots, S(V_\ell)$ are linearly independent subspaces~\cite[Theorem 9]{CraciunJohnstonSzederkenyiTonelloTothYu2020}. 
    
    We claim that the minimal set of generators $\{ \vv c_1,\vv c_2,\ldots, \vv c_\ell\}$ forms a basis for $\ker \mm W$. Let $\vv c \in \ker \mm W$ be arbitrary. If $\vv c$ has non-negative components, then it is a linear combination of $\vv c_p$'s. If $\vv c \not\in \rrp^m$, then there exist sufficiently large constants $\mu_p > 0$ so that 
    \eq{ 
         \vv c + \sum_{p=1}^\ell \mu_p \vv c_p \in \rrp^m. 
    }
    This vector is a non-negative combination of $\vv c_1$, $\vv c_2, \ldots, \vv c_\ell$; thus, $\vv c$ is a linear combination of the generating vectors. Since $\mm W \in \rr^{n \times |V|}$, we have $\rank \mm W = |V| - \ell$. 
    
    Let $S$ be the associated linear space of $(V,E)$, i.e., $S = \mrm{span} \{ \yy_j - \yy_i \st \yy_i \to \yy_j \in E\}$. The falsity of the \textbf{if} statement in line~\ref{alg:if-cone} implies that $\ran \mm W \subseteq S$, so $\dim S \geq |V| - \ell$. Hence, the deficiency of $(V,E)$ is $\delta = |V| - \ell - \dim S \leq 0$. Because $\delta$ is always non-negative, we conclude that $\delta = 0$. 
\end{proof}

\begin{lem}
\label{claim2c}
    Suppose \Cref{algorithm} reaches line~\ref{alg:WR0-E}. Then $(V,E)$ is weakly reversible. 
\end{lem}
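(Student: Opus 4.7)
The plan is to count the number of terminal strongly connected components of $(V,E)$ and show that it equals the number $\ell$ of connected components, which will force each connected component to be strongly connected.

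First I would leverage what was already established in the proof of \Cref{claim2b}: that $\{\vv c_1, \vv c_2, \ldots, \vv c_\ell\}$ forms a basis of $\ker \mm W$, and hence $\dim \ker \mm W = \ell$. Because every $\vv w_i$ is non-zero, every vertex of the constructed graph $(V,E)$ is a source, so $\mm Y_s = \mm Y$, and the algorithm's decomposition in line~\ref{alg:decompose-wi} yields $\mm W = \mm Y \mm A_{\vv\kk}$. In particular, $\ker \mm A_{\vv\kk} \subseteq \ker \mm W$, giving $\dim \ker \mm A_{\vv\kk} \leq \ell$. On the other hand, by \Cref{thm:kerAk}, $\dim \ker \mm A_{\vv\kk}$ equals the number $t$ of terminal strongly connected components of $(V,E)$, so $t \leq \ell$. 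Conversely, each of the $\ell$ connected components of $(V,E)$ contains at least one terminal strongly connected component (in any finite directed graph, each weakly connected component of the condensation DAG has at least one sink), so $t \geq \ell$. Therefore $t = \ell$, each connected component contains exactly one terminal strongly connected component, and the dimensions force $\ker \mm A_{\vv\kk} = \ker \mm W$.

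To conclude, observe that $\vv c_p \in \ker \mm W = \ker \mm A_{\vv\kk}$ with $\vv c_p \in \rrp^m$ and $\mrm{supp}(\vv c_p) = V_p$. By \Cref{thm:kerAk}, $\ker \mm A_{\vv\kk}$ admits a basis of non-negative vectors with pairwise disjoint supports, each supported on one of the $\ell = t$ terminal strongly connected components. Decomposing $\vv c_p$ in this basis and using non-negativity, the support $V_p$ of $\vv c_p$ must be the union of those terminal strongly connected components appearing with positive coefficient. But exactly one terminal strongly connected component lies inside $V_p$, so $V_p$ itself must equal that terminal strongly connected component. Hence each $V_p$ is strongly connected, and $(V,E)$ is weakly reversible.

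The main obstacle is not any single calculation but the careful bookkeeping to turn the inclusion $\ker \mm A_{\vv\kk} \subseteq \ker \mm W$ and the dimension identity $\dim \ker \mm W = \ell$ into the tight equality $t = \ell$; once that is in hand, \Cref{thm:kerAk} delivers the conclusion almost immediately.
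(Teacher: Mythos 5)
Your proof is correct, and it reaches the crucial identity $\ker \mm A_{\vv\kk} = \ker \mm W$ by a genuinely different, more elementary route than the paper. The paper reorders the vertices so that $\mm A_{\vv\kk}$ becomes block-diagonal, observes that each connected component individually has deficiency zero (this is where affine independence of $V_p$ enters), and then applies the identity $\dim(\ker \mm Y^{(p)} \cap \ran \mm A^{(p)}_{\vv\kk}) = |V_p| - t_p - \dim S(V_p)$ block by block to conclude that $\ker \mm W^{(p)} = \ker \mm A^{(p)}_{\vv\kk}$ is one-dimensional and spanned by the strictly positive vector $\vv c_p'$. You instead run a single global dimension count: $\dim\ker\mm A_{\vv\kk} = t \le \ell = \dim\ker\mm W$, using $\ker\mm A_{\vv\kk}\subseteq\ker\mm W$, \Cref{thm:kerAk}, and the fact (from the proof of \Cref{claim2b}) that the $\vv c_p$ form a basis of $\ker\mm W$; the reverse inequality $t\ge\ell$ comes from the elementary observation that every connected component of a finite digraph contains at least one terminal strongly connected component. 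This sidesteps both the block-diagonal bookkeeping and the deficiency rank formula, and notably does not use affine independence of the $V_p$ anywhere in this lemma. Both arguments then close in essentially the same way: $\vv c_p$ is a non-negative vector in $\ker\mm A_{\vv\kk}$ with support exactly $V_p$, and since by \Cref{thm:kerAk} the kernel basis vectors are supported on the terminal strongly connected components with pairwise disjoint supports, $V_p$ must coincide with its unique terminal strongly connected component, forcing each component to be strongly connected. What the paper's version buys is an explicit per-component deficiency-zero statement reused elsewhere; what yours buys is a shorter, more self-contained argument resting only on \Cref{claim2a}, \Cref{claim2b}, and \Cref{thm:kerAk}.
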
   
\begin{proof}
    Without loss of generality, we reorder the vertices according to the connected components of the deficiency zero realization $(V,E,\vv\kk)$ found in line~\ref{alg:WR0-E}. Let $\yy_1$, $\yy_2, \ldots, \yy_{m_1}$ form the first connected component; $\yy_{m_1+1}, \ldots, \yy_{m_2}$ form the second connected component, and so on. Accordingly, reorder the columns of the matrix $\mm W$ of net direction vectors, the indices of the generators $\vv c_p$, and their supports $V_p$. Under the new ordering of vertices, $\mrm{supp}(\vv c_p) = V_p = \{ \yy_{m_{p-1}+1},\ldots, \yy_{m_p}\}$ for $p=1$, $2,\ldots, \ell$, where we take $m_0 = 0$.
    
    With the new ordering of vertices, the Kirchoff matrix $\mm A_{\vv\kk}$ of $(V,E)$ is block-diagonal. Denote by $\mm A_{\vv\kk}^{(p)}$ the $p$th block, which encodes the connectivity of the connected component defined by $V_p$. Indeed, $\mm A_{\vv\kk}^{(p)}$ is the Kirchoff matrix of the $p$th connected component when viewed as a weighted E-graph in the subgraph sense. 
    
    We now prove that the first component $(V_1,E_1)$ is strongly connected. Let $\mm Y^{(1)}$ be the first $m_1$ columns of $\mm Y_s$ and $\mm W^{(1)}$ be the first $m_1$ columns of $\mm W$, so that $\mm Y^{(1)} \mm A_{\vv\kk}^{(1)} = \mm W^{(1)}$. Let $\vv c_1 = \vv c_1' \oplus \vv 0 \in \rrp^n$, with $\vv c_1' \in \rrpp^{m_1}$ spanning the one-dimensional subspace $\ker \mm W^{(1)}$. Finally, let $S_1 = \Span \{ \yy_j -\yy_i \st \yy_i \in V_1\}$. 
    
    Because $(V_1, E_1)$ is connected and $V_1$ is affinely independent, $\dim S_1 = |V_1| - 1$, so $\delta_1 = 0$. Moreover, if $t$ denotes the number of terminal strongly connected components in $(V_1,E_1)$, then $\dim (\ker \mm Y^{(1)} \cap \ran \mm A_{\vv\kk}^{(1)}) = |V_1| - t - \dim S_1$~\citelist{\cite{FeinbergHorn1977} \cite{GunaNts}*{Proposition 3.1}}. Since $|V_1|-t- \dim S_1 \leq \delta_1$, we conclude that $\ker (\mm Y^{(1)} \mm A_{\vv\kk}^{(1)}) = \ker \mm A_{\vv\kk}^{(1)}$, and $\vv c_1'$ also spans $\ker \mm A_{\vv\kk}^{(1)}$. By \Cref{thm:kerAk}, $\vv c_1'$ is supported on the terminal strongly connected component, which in this case is all of $V_1$. Therefore, $(V_1,E_1)$ is in fact strongly connected. 
    
    An analogous claim can be made about the other connected components. Consequently $(V,E)$ is weakly reversible.     
\end{proof}

\begin{rmk}
\label{rmk:stoich-linkage}
In the proof of \Cref{claim2a} and \Cref{claim2b}, we have proved that the associated linear subspace $S(V_p)$ is in fact the span of the net direction vectors belonging to said connected component. Thus, there are multiple ways of generating $S(V_p)$:
\eq{ 
    \Span\{ \yy_j - \yy_i \st \yy_i, \yy_j \in V_p \} 
    = \Span\{ \yy_j - \yy_i \st  \yy_i \to \yy_j \in E_p\} 
    =\ran \mm W^{(p)}.
}
\end{rmk}

\bigskip

The lemmas above provide the technical parts that we need to prove the main result of this paper. 
\begin{thm}
\label{thm:alg}
Given a system of differential equations
    \eq{ 
        \frac{d\xx}{dt} = \sum_{i=1}^m \xx^{\yy_i} \vv w_i,
    }
with distinct $\yy_i \in \zzp^n$, and $\vv w_i \in \rr^n\setminus\{\vv 0\}$.  \Cref{algorithm} returns the unique \WRz\ realization of the dynamical system if it exists, or concludes that no \WRz\ realization exists.
\end{thm}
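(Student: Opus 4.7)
The plan is to assemble the main theorem by combining the four auxiliary lemmas already proved with a separate uniqueness argument, splitting the proof into two cases according to the algorithm's runtime behavior. The bulk of the technical content has been handled in Lemmas \ref{claim1}, \ref{claim2a}, \ref{claim2b}, and \ref{claim2c}, so the proof is largely a verification that these pieces fit together correctly and a uniqueness argument that leverages the structural rigidity guaranteed by weak reversibility and deficiency zero.

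First I would handle the early-exit case: if the algorithm terminates at any of lines \ref{alg:exit1}, \ref{alg:exit2}, or \ref{alg:exit3}, then Lemma \ref{claim1} directly gives that no \WRz\ realization of $\dot{\xx} = \sum_i \xx^{\yy_i}\vv w_i$ exists, so the algorithm's conclusion is correct. In the complementary case, the algorithm runs to completion and outputs a weighted E-graph $(V,E,\vv\kk)$. I would first check that this is indeed a realization: line \ref{alg:decompose-wi} decomposes each input net direction vector as $\vv w_i = \sum_j \kk_{ij}(\yy_j - \yy_i)$ with $\kk_{ij} \geq 0$, which is precisely the condition that the net direction vector from $\yy_i$ in $(V,E,\vv\kk)$ equals $\vv w_i$; by the lemma following \Cref{def:DE}, this means $(V,E,\vv\kk)$ generates the prescribed dynamics. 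Then Lemma \ref{claim2a} identifies $V_1, \ldots, V_\ell$ as the connected components, Lemma \ref{claim2b} gives $\delta = 0$, and Lemma \ref{claim2c} gives weak reversibility. Together, these establish that $(V,E,\vv\kk)$ is a \WRz\ realization.

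For uniqueness, let $(V',E',\vv\kk')$ be any \WRz\ realization of $\dot{\xx} = \sum_i \xx^{\yy_i}\vv w_i$. By \Cref{lem:mtx}\ref{lem:mtx-gen}, the connected components of $(V',E')$ correspond to the supports of a minimal set of generators of the pointed polyhedral cone $\ker \mm W \cap \rrp^m$. Such a minimal generating set is unique up to positive scaling of each generator, and scaling preserves supports, so the connected components of $(V',E')$ must coincide with $\{V_p\}_{p=1}^\ell$ as produced by the algorithm. Within each $V_p$, affine independence (verified in line \ref{alg:if-affine}) implies that for every $i \in V_p$ the set $\{\yy_j - \yy_i : j \in V_p, j \neq i\}$ is linearly independent. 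Consequently, the decomposition $\vv w_i = \sum_{j \in V_p,\, j\neq i} \kk'_{ij}(\yy_j - \yy_i)$ is unique, forcing $\vv\kk' = \vv\kk$ and $E' = E$.

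The hard part of this theorem has effectively been pushed into the preceding lemmas, so the principal subtlety I anticipate here is making the uniqueness argument airtight: in particular, ensuring that the uniqueness of minimal generators of a pointed polyhedral cone (up to positive scaling) is invoked correctly, and that affine independence is used cleanly to get uniqueness of the coefficient decomposition. The existence direction, by contrast, reduces to a straightforward concatenation of Lemmas \ref{claim2a}--\ref{claim2c} together with the observation that line \ref{alg:decompose-wi} forces the output to reproduce the correct dynamics.
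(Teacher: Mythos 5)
Your proof is correct and follows the paper's structure for the existence and correctness claims: split on whether the algorithm exits early (\Cref{claim1}) or runs to completion (\Cref{claim2a,claim2b,claim2c}), plus the observation --- left implicit in the paper's proof --- that line~\ref{alg:decompose-wi} guarantees the output graph actually realizes the given dynamics. The one genuine divergence is uniqueness: the paper simply cites the external reference \cite{CraciunJinYu_unique}, whereas you give a self-contained argument --- the connected components of any \WRz\ realization are forced by the uniqueness (up to positive scaling) of the minimal generating set of the pointed cone $\ker \mm W \cap \rrp^m$ via \Cref{lem:mtx}\ref{lem:mtx-gen}, and then affine independence within each component makes the decomposition of each $\vv w_i$ over $\{\yy_j - \yy_i \st j \in V_p\}$ unique, pinning down the edge set and weights. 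This is exactly the mechanism the paper gestures at in the remark following the theorem, and your version has the advantage of being internal to the paper. The only step worth making explicit is that the vertex set of an arbitrary \WRz\ realization is itself determined: as established at the start of the proof of \Cref{lem:mtx}, weak reversibility and deficiency zero force the vertices to be exactly the source vertices $\yy_1,\ldots,\yy_m$, which is what lets you identify supports of generators (index sets in $\{1,\ldots,m\}$) with connected components of the competing realization.
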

\begin{proof}
    There are two possible scenarios: either (1) the algorithm exits at lines~\ref{alg:exit1}, \ref{alg:exit2}, or \ref{alg:exit3} by failing one of the \textbf{if} statements, or (2) the algorithm successfully reaches line~\ref{alg:WR0-E}. In the first scenario, \Cref{claim1} implies that no \WRz\ realization exists. In the second scenario, the realization has connected components $V_1$, $V_2,\ldots, V_\ell$ according to \Cref{claim2a}. The realization is weakly reversible and deficiency zero by \Cref{claim2b,claim2c} respectively. The uniqueness of the realization follows from \cite{CraciunJinYu_unique}.
\end{proof}

\begin{rmk}
The uniqueness of the \WRz\ realization is also a consequence of \Cref{algorithm}. This is due to the affine and linear independences, as well as the structure of $\ker \mm W = \ker \mm A_{\vv\kk}$. 
\end{rmk}

If a \WRz\ realization exists, the polynomial dynamical system is complex-balanced. Therefore, if a system passes \Cref{algorithm}, it automatically inherits all the algebraic and dynamical properties of complex-balanced system. Weak reversibility implies that a positive steady state exists~\cite{Boros2019}. The remaining statements in the theorem below are easy consequence of \Cref{thm:HJ,thm:alg}.

\begin{thm} 
\label{cor:alg}
Suppose the system of differential equations
    \eqn{\label{eq:poly-cor} 
        \frac{d\xx}{dt} = \sum_{i=1}^m \xx^{\yy_i} \vv w_i,
    }
with distinct  $\yy_i \in \zzp^n$ and $\vv w_i \in \rr^n\setminus \{\vv 0\}$, passes \Cref{algorithm}. Let $\mm W$ be the matrix of net direction vectors and $S = \ran \mm W$. Then the following holds. 
\begin{enumerate}[label={(\roman*)}]
\item A positive steady state $\xx^*$ exists. 
\item There is exactly one steady state within every invariant polyhedron $(\xx_0 + S) \cap \rrpp^n$ for any $\xx_0 \in \rrpp^n$, and it is complex-balanced. 
\item Any positive steady state $\xx$ satisfies $\ln \xx - \ln \xx^* \in S^\perp$.
    
\item The function
    \eq{ 
        L(\xx) = \sum_{i=1}^n x_i(\ln x_i - \ln x^*_i - 1), 
    }
defined on $\rrpp^n$, is a strict Lyapunov function of \eqref{eq:poly-cor} within every invariant polyhedron $(\xx_0+S)\cap\rrpp^n$, with a global minimum at the corresponding complex-balanced steady state. 
\item Every positive steady state is locally asymptotically stable with respect to its invariant polyhedron. 
\end{enumerate}
\end{thm}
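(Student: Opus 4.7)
The plan is to reduce everything to the already established machinery. By hypothesis the system passes \Cref{algorithm}, so \Cref{thm:alg} produces a \WRz\ realization $(V,E,\vv\kk)$ of \eqref{eq:poly-cor}. By the Deficiency Zero Theorem, every weighted \WRz\ E-graph is complex-balanced for any choice of rate constants, so $(V,E,\vv\kk)$ is a complex-balanced system in the sense of \Cref{def:CB}. The five conclusions will then follow by direct citation of earlier results, essentially a repackaging of \Cref{thm:HJ}.

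Before invoking \Cref{thm:HJ}, one small bookkeeping step is needed: the invariant polyhedra and the orthogonal complement $S^\perp$ appearing in statements (ii)--(iv) are defined in terms of the associated linear space of the realization, whereas the statement of \Cref{cor:alg} uses $S = \ran \mm W$. \Cref{lem:mtx}(ii) tells us that $\ran \mm W$ coincides with the associated linear space of $(V,E,\vv\kk)$, so the two notions of invariant polyhedron and of $S^\perp$ agree, and \Cref{thm:HJ} applies verbatim.

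With this identification in hand, I would simply walk through the list. For (i), weak reversibility of $(V,E)$ alone is enough: a positive steady state is guaranteed by the result of Boros~\cite{Boros2019} cited in the paragraph preceding the theorem. For (ii)--(iv), these are exactly parts (i)--(iii) of \Cref{thm:HJ} applied to the complex-balanced realization $(V,E,\vv\kk)$, using the identification $S = \ran \mm W$ noted above; in particular every positive steady state of \eqref{eq:poly-cor} is complex-balanced, and the function $L(\xx)$ is a strict Lyapunov function on each invariant polyhedron with global minimum at the unique complex-balanced steady state inside it. For (v), local asymptotic stability with respect to the invariant polyhedron is part (iv) of \Cref{thm:HJ}.

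There is essentially no obstacle: the only content of the proof is recognising that once \Cref{algorithm} succeeds, \Cref{thm:alg} and \Cref{lem:mtx} reduce the statement to the classical Horn--Jackson theorem, together with Boros's existence result. The proof is therefore a short sequence of citations rather than a calculation.
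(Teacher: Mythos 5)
Your proposal is correct and follows essentially the same route as the paper: the paper likewise notes that passing \Cref{algorithm} yields a \WRz\ (hence complex-balanced) realization, cites \cite{Boros2019} for existence of a positive steady state, and derives the remaining parts directly from \Cref{thm:HJ} and \Cref{thm:alg}. Your explicit identification of $S = \ran\mm W$ with the associated linear space via \Cref{lem:mtx}(ii) is a small additional bookkeeping step the paper leaves implicit.
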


\begin{ex}
\label{ex:WR0} 
Consider the system of differential equations
\begin{equation}\label{eq:ex-WR0-ODE}
\begin{split}
     \frac{d x_1}{dt} &= - 12 x_1  +  x^2_3,
\\ \frac{d x_2}{dt} &= 14 x_1 - 4 x^2_2 + 8 x^2_3,
\\ \frac{d x_3}{dt} &= 10 x_1 + 4 x^2_2 - 10 x^2_3.
\end{split}
\end{equation}
We have $n = 3$ for the three state variables, and $m=3$ for the three distinct monomials. The matrices of source vertices and net direction vectors are 
\eq{ 
    \mm Y_s &= \begin{pmatrix} \yy_1 & \yy_2 & \yy_3 \end{pmatrix} = \begin{pmatrix} 1 & 0 & 0 \\
    0 & 2 & 0 \\ 
    0 & 0 & 2\end{pmatrix},   \\
    \mm W &= \begin{pmatrix} \vv w_1 & \vv w_2 & \vv w_3 \end{pmatrix} = \begin{pmatrix} -12 & \hphantom{-}0 & \hphantom{-}1  \\
    \hphantom{-}14 & -4 & \hphantom{-}8\\ 
    \hphantom{-}10 & \hphantom{-}4 & -10\end{pmatrix}
} 
respectively, which are inputs to \Cref{algorithm}. Let $V = \{ \yy_1, \yy_2,\yy_3\} \subset \zzp^n$. A generator for the cone $\ker \mm W \cap \rrp^m$ is $\vv c = (48/1441, 120/131, 576/1441)^\top$. In the notation of \Cref{algorithm}, $V_1 = [1,3]$.  Clearly $V$ is affinely independent and the net direction vectors admit the following unique decompositions: 
\eq{ 
    \vv w_1 &= 7 (\yy_2 - \yy_1) + 5(\yy_3 - \yy_1),  \\ 
    \vv w_2 &= 2(\yy_3 - \yy_2),  \\
    \vv w_3 &= (\yy_1 - \yy_3) + 4(\yy_2 - \yy_3).
} 
Therefore \eqref{eq:ex-WR0-ODE} admits a \WRz\ realization, whose weighted E-graph is shown in \Cref{fig:ex-WR0-realization}.

This implies that the system \eqref{eq:ex-WR0-ODE} has exactly one steady state within each invariant triangle given by $2x_1+x_2+x_3 = C$ for some $C>0$, and  this steady state is a global attractor within each such triangle. From \Cref{cor:alg}, we know the steady state set admits a monomial parametrization of the form $(a_1 s^2 , a_2 s, a_3s)$ for some constants $a_i  > 0$. In fact, the set of steady states is given by 
\eq{ 
    ( x_1^* , x_2^* , x_3^* )
    = \left(
        3 s^2 ,  \,
        \frac{\sqrt{330}}{2} s , \, 
        6 s
    \right),  
}
and an explanation for the coefficients above will be provided in \Cref{thm:eqm}.
\end{ex}

\begin{figure}[h!]
\centering 
\begin{subfigure}[b]{0.3\textwidth}
\centering 
    \begin{tikzpicture}[scale=1]
    \begin{axis}[
      view={115}{15},
      axis lines=center,
      width=6cm,height=6cm,
      ticks = none, 
      xmin=0,xmax=2.75,ymin=0,ymax=2.65,zmin=0,zmax=2.5,
    ]
    \draw [draw opacity=0, fill opacity=0.5, fill = pastelpink] (axis cs:1,0,0) -- (axis cs:0,2,0) -- (axis cs:0,0,2); 
    \addplot3 [no marks, dashed, gray!70] coordinates {(1,0,0)  (1,0,2.5)};
    \addplot3 [no marks, dashed, gray!70] coordinates {(2,0,0)  (2,0,2.5)};
    \addplot3 [no marks, dashed, gray!70] coordinates {(0,1,0)  (0,1,2.5)};
    \addplot3 [no marks, dashed, gray!70] coordinates {(0,2,0)  (0,2,2.5)};
    \addplot3 [no marks, dashed, gray!70] coordinates {(2.5,0,1)  (0,0,1) (0,2.5,1)};
    \addplot3 [no marks, dashed, gray!70] coordinates {(2.5,0,2)  (0,0,2) (0,2.5,2)};
    \addplot3 [no marks, dashed, gray!70] coordinates {(1,0,0)  (1,2.5,0) };
    \addplot3 [no marks, dashed, gray!70] coordinates {(2,0,0)  (2,2.5,0) };
    \addplot3 [no marks, dashed, gray!70] coordinates {(0,1,0)  (2.5,1,0) };
    \addplot3 [no marks, dashed, gray!70] coordinates {(0,2,0)  (2.5,2,0) };
    
    \addplot3 [only marks, blue] coordinates {(1,0,0) (0,2,0) (0,0,2)
    (0.3333,0.7778,0.5556) (0,1,1) (0.1,0.8,1)};
    
    \node [outer sep=1pt] (1) at (axis cs:1,0,0) {};
    \node [outer sep=1pt] (2) at (axis cs:0,2,0) {};
    \node [outer sep=1pt] (3) at (axis cs:0,0,2) {};
        \node at (1) [left] {$\yy_1$\,};
        \node at (2) [below=0.8pt] {$\yy_2$};
        \node at (3) [left] {$\yy_3$};
    \node (1t) at (axis cs:0.3333,0.7778,0.5556) {}; 
    \node (2t) at (axis cs:0,1,1) {}; 
    \node (3t) at (axis cs:0.1,0.8,1) {}; 
    \draw [-{stealth}, thick, blue, transform canvas={yshift=0pt}] (1)--(1t) node [midway, below right] {\!\!\ratecnst{$18$}}; 
    \draw [-{stealth}, thick, blue, transform canvas={yshift=0pt}] (2)--(2t) node [midway, above right] {\!\!\ratecnst{$4$}}; 
    \draw [-{stealth}, thick, blue, transform canvas={yshift=0pt}] (3)--(3t) node [midway, left] {\ratecnst{$10$}\,}; 
    \end{axis}
    \end{tikzpicture}
    \caption{}\label{fig:ex-WR0-newt}
\end{subfigure}\hspace{0.5cm}
\begin{subfigure}[b]{0.3\textwidth}
    \centering 
    \begin{tikzpicture}[scale=1]
    \begin{axis}[
      view={115}{15},
      axis lines=center,
      width=6cm,height=6cm,
      ticks = none, 
      xmin=0,xmax=2.75,ymin=0,ymax=2.65,zmin=0,zmax=2.5,
    ]
    \addplot3 [no marks, dashed, gray!70] coordinates {(1,0,0)  (1,0,2.5)};
    \addplot3 [no marks, dashed, gray!70] coordinates {(2,0,0)  (2,0,2.5)};
    \addplot3 [no marks, dashed, gray!70] coordinates {(0,1,0)  (0,1,2.5)};
    \addplot3 [no marks, dashed, gray!70] coordinates {(0,2,0)  (0,2,2.5)};
    \addplot3 [no marks, dashed, gray!70] coordinates {(2.5,0,1)  (0,0,1) (0,2.5,1)};
    \addplot3 [no marks, dashed, gray!70] coordinates {(2.5,0,2)  (0,0,2) (0,2.5,2)};
    \addplot3 [no marks, dashed, gray!70] coordinates {(1,0,0)  (1,2.5,0) };
    \addplot3 [no marks, dashed, gray!70] coordinates {(2,0,0)  (2,2.5,0) };
    \addplot3 [no marks, dashed, gray!70] coordinates {(0,1,0)  (2.5,1,0) };
    \addplot3 [no marks, dashed, gray!70] coordinates {(0,2,0)  (2.5,2,0) };
    
    \addplot3 [only marks, blue] coordinates {(1,0,0) (0,2,0) (0,0,2)};
    
    \node [outer sep=1pt] (1) at (axis cs:1,0,0) {};
    \node [outer sep=1pt] (2) at (axis cs:0,2,0) {};
    \node [outer sep=1pt] (3) at (axis cs:0,0,2) {};
        \node at (1) [left] {$\yy_1$\,};
        \node at (2) [below=0.8pt] {$\yy_2$};
        \node at (3) [left] {$\yy_3$};
    \draw [-{stealth}, thick, blue, transform canvas={yshift=0pt}] (1)--(2) node [midway, below] {\ratecnst{$7$}}; 
    \draw [-{stealth}, thick, blue, transform canvas={xshift=-0.25ex, yshift=0.1ex}] (1)--(3) node [midway, below left] {\ratecnst{$5$}}; 
    \draw [-{stealth}, thick, blue, transform canvas={xshift=0.25ex, yshift=-0.1ex}] (3)--(1) node [midway, below right] {\!\!\ratecnst{$1$}}; 
    \draw [-{stealth}, thick, blue, transform canvas={xshift=0.2ex, yshift=0.2ex}] (3)--(2) node [near end, above right] {\!\!\ratecnst{$4$}}; 
    \draw [-{stealth}, thick, blue, transform canvas={xshift=-0.2ex, yshift=-0.2ex}] (2)--(3) node [near start, left] {\ratecnst{$2$}\,}; 
    \end{axis}
    \end{tikzpicture}
    \caption{}\label{fig:ex-WR0-realization}
\end{subfigure}\hspace{0.5cm}
\begin{subfigure}[b]{0.3\textwidth}
    \centering 
    \begin{tikzpicture}[scale=1]
    \begin{axis}[
      view={115}{15},
      axis lines=center,
      width=6cm,height=6cm,
      ticks = none, 
      xmin=0,xmax=2.75,ymin=0,ymax=2.65,zmin=0,zmax=2.5,
    ]
    \draw [draw opacity=0, fill opacity=0.5, fill = pastelpink] (axis cs:1,0,0) -- (axis cs:0,2,0) -- (axis cs:0,0,2); 
    \addplot3 [no marks, dashed, gray!70] coordinates {(1,0,0)  (1,0,2.5)};
    \addplot3 [no marks, dashed, gray!70] coordinates {(2,0,0)  (2,0,2.5)};
    \addplot3 [no marks, dashed, gray!70] coordinates {(0,1,0)  (0,1,2.5)};
    \addplot3 [no marks, dashed, gray!70] coordinates {(0,2,0)  (0,2,2.5)};
    \addplot3 [no marks, dashed, gray!70] coordinates {(2.5,0,1)  (0,0,1) (0,2.5,1)};
    \addplot3 [no marks, dashed, gray!70] coordinates {(2.5,0,2)  (0,0,2) (0,2.5,2)};
    \addplot3 [no marks, dashed, gray!70] coordinates {(1,0,0)  (1,2.5,0) };
    \addplot3 [no marks, dashed, gray!70] coordinates {(2,0,0)  (2,2.5,0) };
    \addplot3 [no marks, dashed, gray!70] coordinates {(0,1,0)  (2.5,1,0) };
    \addplot3 [no marks, dashed, gray!70] coordinates {(0,2,0)  (2.5,2,0) };
    
    \addplot3 [only marks, blue] coordinates {(1,0,0) (0,2,0) (0,0,2) 
    (0.875,-0.5,0.75) (0,1,1) (0.1,0.8,1)};
    
    \node [outer sep=1pt] (1) at (axis cs:1,0,0) {};
    \node [outer sep=1pt] (2) at (axis cs:0,2,0) {};
    \node [outer sep=1pt] (3) at (axis cs:0,0,2) {};
        \node at (1) [left] {$\yy_1$\,};
        \node at (2) [below=0.8pt] {$\yy_2$};
        \node at (3) [left] {$\yy_3$};
    \node (1t) at (axis cs:0.875,-0.5,0.75) {}; 
    \node (2t) at (axis cs:0,1,1) {}; 
    \node (3t) at (axis cs:0.1,0.8,1) {}; 
    \draw [-{stealth}, thick, blue, transform canvas={yshift=0pt}] (1)--(1t) node [midway, above right] {\!\ratecnst{$4$}}; 
    \draw [-{stealth}, thick, blue, transform canvas={yshift=0pt}] (2)--(2t) node [midway,  right] {\ratecnst{$4$}}; 
    \draw [-{stealth}, thick, blue, transform canvas={yshift=0pt}] (3)--(3t) node [midway, below left] {\ratecnst{$10$}\!\!}; 
    \end{axis}
    \end{tikzpicture}
    \caption{}\label{fig:ex-not-WR0}
\end{subfigure}
\caption{(a) A weighted E-graph realizing \eqref{eq:ex-WR0-ODE} from \Cref{ex:WR0}, which admits 
	(b) a \WRz\ realization. 
	(c) A weighted E-graph realizing \eqref{eq:ex-WR0-not-ODE} from \Cref{ex:not-WR0} that does not admit a \WRz\ realization. }
\label{fig:ex-frompoly}
\end{figure}
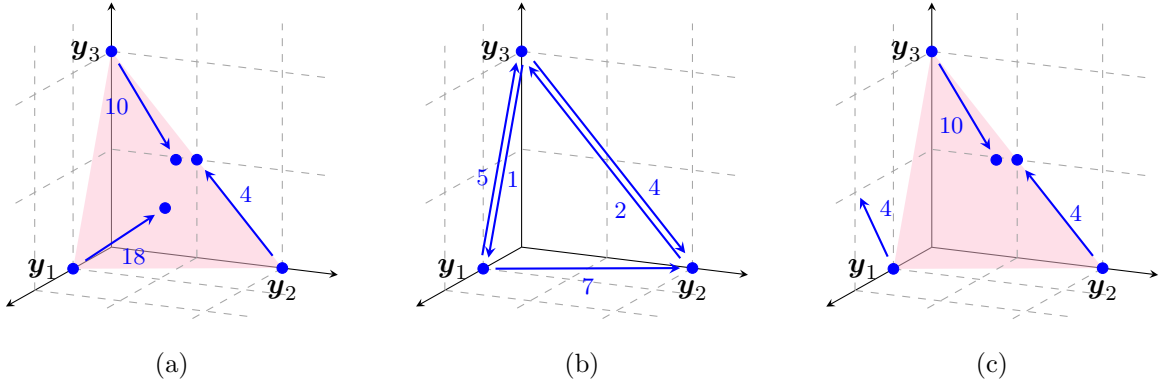

\begin{ex}
\label{ex:not-WR0}
Consider the system of differential equations
\begin{equation}\label{eq:ex-WR0-not-ODE}
\begin{split}
     \frac{d x_1}{dt} &= -\frac{1}{2} x_1  +   x^2_3,
\\ \frac{d x_2}{dt} &= -2 x_1 -4 x^2_2 + 8 x^2_3,
\\ \frac{d x_3}{dt} &= 3 x_1 + 4 x^2_2  -10 x^2_3.
\end{split}
\end{equation}
Again, we have $n = 3$  and $m=3$. The monomials are the same as those in the previous example. The difference lies in the first column of the matrix of net direction vectors  
\eq{ 
    \mm W &= \begin{pmatrix} \vv w_1 & \vv w_2 & \vv w_3 \end{pmatrix} = 
    \begin{pmatrix} 
    -\frac{1}{2} & \hphantom{-}0 & \hphantom{-}1  \\
    -2 & -4 & \hphantom{-}8 \\ 
    \hphantom{-}3 & \hphantom{-}4 & -10
    \end{pmatrix},  
} 
whose kernel is spanned by $\vv c = (2,1,1)^\top$.  As in the previous example, the vertices $\yy_1$, $\yy_2$, and $\yy_3$ are affinely independent. However,  $\vv w_1 \not\in \mrm{Cone}\{\yy_j - \yy_1 \st j = 2,3\}$, so no \WRz\ realization exists. 
\end{ex}

\subsection{The set of positive steady states of a \WRz\ realization}
\label{sec:ss-set}

\Cref{algorithm} determines whether a given polynomial dynamical system admits a \WRz\ realization. If it does, its steady state set is in fact log-linear. In this section, we write down a system of linear equations whose solution set is in bijection with the set of positive steady states; this provides an explicit parametrization of the set of positive steady states. 

For any $\vv z \in \rr^n$ and $\xx \in \rrpp^n$, define the component-wise operations $\exp \vv z = (e^{z_1}, e^{z_2},\ldots, e^{z_n})^\top$ and $\log(\xx) = (\log x_1, \log x_2,\ldots, \log x_n)^\top$. We extend these operations to sets. If $Z \subseteq \rr^n$, then $\exp(Z) = \{ \exp \vv z \st \vv z \in Z\}$, and if $X \subseteq \rrpp^n$, then $\log(X) = \{ \log \xx \st \xx \in X\}$. 

Assume that the polynomial dynamical system 
    \eqn{ \label{eq:alg-ODE} 
        \frac{d\xx}{dt} = \sum_{i=1}^m \xx^{\yy_i} \vv w_i, 
    }
with distinct $\yy_i \in \zzp^n$ and $\vv w_i \in \rr^n \setminus\{\vv 0\}$, passes \Cref{algorithm}, i.e., it admits a \WRz\ realization $(V,E,\vv\kk)$. Without loss of generality, assume the vertices are ordered according to connected components in $(V,E)$, i.e., the first $m_1$ vertices belong to the connected component $(V_1,E_1)$, the next $m_2$ vertices belong to the connected component $(V_2,E_2)$, and so forth. Let $\{ \vv c_1, \vv c_2,\ldots, \vv c_\ell\}$ be a minimal set of generators of $\ker \mm W \cap \rrp^m$, ordered in an analogous way.  From \Cref{algorithm}, we know that the supports of the vectors $\vv c_1$, $\vv c_2,\ldots, \vv c_\ell$ correspond to the connected components of $(V,E)$.  

Let $\vv c_1 = (\alpha_1, \alpha_2,\ldots, \alpha_{m_1}, 0,\ldots, 0)^\top$. Define matrix $\mm D_1 \in \rr^{(m_1-1) \times n}$ whose rows are the affine vectors from $\yy_1$ to the remaining vertices of $V_1$, and define vector $\vv J_1  \in  \rr^{m_1-1}$ using the log-differences of the components of $\vv c_1$, i.e., 
\eq{ 
    \mm D_1 = \begin{pmatrix}
        \yy_2 - \yy_1 \\
        \yy_3 - \yy_1 \\
        \vdots \\
        \yy_{m_1} - \yy_1
    \end{pmatrix} 
    \quad \text{and} \quad 
    \vv J_1 = \begin{pmatrix}
    \log (\alpha_2/ \alpha_1 )\\
    \log (\alpha_3 /  \alpha_1) \\
    \vdots  \\
    \log (\alpha_{m_1} / \alpha_1 )
    \end{pmatrix} . 
}
For the connected component $(V_p,E_p)$, define $\mm D_p$ and $\vv J_p$ in a similar fashion. Define  
\eqn{ \label{eq:pf-mtx} 
    \mm D = \begin{pmatrix}
    \mm D_1 \\ \hline 
    \mm D_2 \\ \hline 
    \vdots \\ \hline 
    \mm D_{\ell} 
    \end{pmatrix} \in \rr^{(m-\ell) \times n}
    \quad \text{and} \quad 
    \vv J = \begin{pmatrix}
        \vv J_1 \\ \hline 
        \vv J_2 \\ \hline 
        \vdots \\ \hline 
        \vv J_\ell
    \end{pmatrix} \in \rr^{m-\ell} .
}

\begin{thm}
\label{thm:eqm}
Suppose the system of differential equation \eqref{eq:alg-ODE} admits a \WRz\ realization $(V,E,\vv\kk)$, and let $\mm D \in \rr^{(m-\ell)\times n}$ and $\vv J \in \rrpp^{m-\ell}$ be defined as in \eqref{eq:pf-mtx}. Then the system $\mm D \vv z = \vv J$ is solvable. Let $\vv z^* + \ker \mm D$ be its solution set. Then the set of positive steady states of \eqref{eq:alg-ODE} is $\exp(\vv z^* + \ker \mm D)$. 
\end{thm}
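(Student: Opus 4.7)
The plan is to characterize the positive steady states using the complex-balanced structure of \WRz\ realizations, and then show that their component-wise logarithms are exactly the solutions of $\mm D \vv z = \vv J$. The underlying idea is that complex-balancing forces $(\xx^*)^{\mm Y}$ to lie in $\ker \mm W \cap \rrpp^m$, which \Cref{lem:mtx}\ref{lem:mtx-gen} tells us is a positive cone with generators whose supports partition the vertex set; hence within each block $V_p$ the monomials of a positive steady state are proportional to the entries of $\vv c_p$, which becomes a linear constraint after taking logs.

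First, I would use \Cref{cor:alg} to conclude that a positive steady state $\xx^* \in \rrpp^n$ exists and is complex-balanced. By \Cref{lem:mtx}(i), the identity $\mm A_{\vv\kk}(\xx^*)^{\mm Y} = \vv 0$ is equivalent to $(\xx^*)^{\mm Y} \in \ker \mm W$. Since every entry of $(\xx^*)^{\mm Y}$ is strictly positive, \Cref{lem:mtx}\ref{lem:mtx-gen} forces a unique decomposition $(\xx^*)^{\mm Y} = \sum_{p=1}^\ell \lambda_p \vv c_p$ with $\lambda_p > 0$ (uniqueness because the supports partition $\{1,\dots,m\}$). Comparing the $i$-th and $k$-th entries for $\yy_i,\yy_k \in V_p$ gives $(\xx^*)^{\yy_k - \yy_i} = [\vv c_p]_k/[\vv c_p]_i$, and taking $\yy_i$ to be the first vertex of $V_p$ and applying $\log$ yields exactly the rows of $\mm D_p \log \xx^* = \vv J_p$. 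Stacking the blocks shows that $\vv z^* \coloneqq \log \xx^*$ solves $\mm D \vv z = \vv J$, establishing solvability.

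Next, I would prove the bijection $V_>(\vv f) = \exp(\vv z^* + \ker \mm D)$ in both directions. For the forward direction, any $\vv z \in \vv z^* + \ker \mm D$ and $\xx \coloneqq \exp \vv z$ satisfies $(\yy_k - \yy_i)\cdot(\vv z - \vv z^*) = 0$ for all pairs $\yy_i, \yy_k$ in the same $V_p$, so the ratios $\xx^{\yy_k}/\xx^{\yy_i}$ coincide with those of $\xx^*$ and therefore equal $[\vv c_p]_k/[\vv c_p]_i$; defining $\mu_p \coloneqq \xx^{\yy_{i_p}}/[\vv c_p]_{i_p} > 0$ at the first vertex $\yy_{i_p}$ of $V_p$ yields $\xx^{\mm Y} = \sum_p \mu_p \vv c_p \in \ker \mm W$, so $\xx$ is a positive steady state. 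For the reverse direction, the argument of the preceding paragraph applied to an arbitrary positive steady state $\xx$ shows $\log \xx$ solves $\mm D \vv z = \vv J$, hence lies in $\vv z^* + \ker \mm D$. Injectivity of $\exp$ on $\rr^n$ completes the bijection. The step requiring the most care is purely bookkeeping: matching the block structure of $\mm D, \vv J$ against the support partition of the $\vv c_p$'s and tracking the chosen reference vertex in each $V_p$; once this is set up, every other step reduces cleanly to \Cref{lem:mtx} and the monomial form of complex-balancing.
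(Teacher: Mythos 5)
Your proof is correct, and the second half—characterizing positive steady states by the condition $\xx^{\mm Y_s} \in \ker \mm W \cap \rrpp^m$, decomposing over the support-disjoint generators $\vv c_p$, and turning the resulting monomial ratios into the linear system $\mm D \log\xx = \vv J$—is essentially the paper's argument (in fact you spell out the "if" direction of the bijection more explicitly than the paper does). Where you genuinely diverge is in establishing solvability of $\mm D \vv z = \vv J$: you exhibit a solution by invoking \Cref{cor:alg}(i) to get an actual positive steady state $\xx^*$ and taking $\vv z^* = \log\xx^*$, whereas the paper proves solvability for \emph{any} right-hand side by showing $\mm D$ has full row rank—affine independence within each $V_p$ makes the rows of each $\mm D_p$ independent, and deficiency zero makes the subspaces $S(V_1),\ldots,S(V_\ell)$ (the row spaces of the blocks, per \Cref{rmk:stoich-linkage}) linearly independent, so $\mm D$ is surjective onto $\rr^{m-\ell}$. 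Your route is shorter but outsources the work to the existence theorem for weakly reversible systems (\cite{Boros2019}, via \Cref{cor:alg}), which is a substantial external result; the paper's route is purely linear-algebraic, uses only the geometric characterization of deficiency zero already in hand, and as a bonus gives $\rank \mm D = m-\ell$, hence the dimension $n-(m-\ell)$ of the positive steady state variety. Both are valid; the paper's is the more self-contained and more informative of the two.
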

\begin{proof}
    First we prove that the linear system $\mm D \vv z = \vv J$ is solvable. Consider $\mm D_1$. The vertices $\yy_1$, $\yy_2,\ldots, \yy_{m_1}$ in the first connected component are affinely independent, so the rows of $\mm D_1$ are linearly independent. Moreover, as noted in \Cref{rmk:stoich-linkage} the row-space of $\mm D_1$ is the associated linear subspace $S(V_1)$. Therefore $\rank \mm D_1 = m_1 - 1$, and the matrix $\mm D_1$ is surjective onto $\rr^{m_1-1}$. 
    
    Similarly, for each $p=2,\ldots, \ell$, the row-space of the matrix $\mm D_p$ is  $S(V_p)$, and the matrix $\mm D_p$ is surjective. In addition, because the realization $(V,E,\vv\kk)$ has deficiency zero, $S(V_1)$, $S(V_2),\ldots, S(V_\ell)$ are linearly independent; in other words, the $m-\ell$ rows of the matrix $\mm D$  are linearly independent. Consequently,  $\mm D$ is surjective, and the system $\mm D \vv z = \vv J$ is solvable. 

    Let $\vv z^* + \ker \mm D$ be the set of solution to  $\mm D \vv z = \vv J$. We next show that each solution can be related to a positive steady state of \eqref{eq:alg-ODE}, which by definition satisfies 
    \eq{ 
        \vv 0 =  \sum_{i=1}^m \xx^{\yy_i} \vv w_i  .
    }
    In other words, $(\xx^{\yy_1}, \ldots, \xx^{\yy_m})^\top$ lies in the steady state flux cone $\ker \mm W \cap \rrpp^m$. Decomposing this vector with respect to the generators of the cone allows us to focus on one connected component at a time. 
    
    For simplicity of notation, consider the first connected component. At steady state, for some constant $\lambda >0$, we have $\xx^{\yy_j} = \lambda \alpha_j$ for $j=1$, $2,\ldots, m_1$. Thus 
    \eq{ 
        \xx^{\yy_j - \yy_1} = \frac{\alpha_j}{\alpha_1}
    }
    for $j=2,3,\ldots, m_1$. Taking the logarithm of both sides, we obtain the system $\mm D_1 \vv z = \vv J_1$ with $\vv z = \log \xx$. 

    Repeating this computation for each connected component, we conclude that $\xx$ is a positive steady state for \eqref{eq:alg-ODE} if and only if $\xx$ solves $\mm D \vv z = \vv J$ with $\vv z = \log \vv x$. This leads us to the characterization of the set of positive steady states for \eqref{eq:alg-ODE} as $\exp(\vv z^* + \ker \mm D)$, where $\vv z^* + \ker \mm D$ is the set of solutions to $\mm D \vv z = \vv J$. 
\end{proof}

\subsection{Extension to polynomial systems with unspecified coefficients}
\label{sec:unspec_coeff}

If instead of \eqref{eq:alg-ODE}, we need to analyze 
\eqn{ \label{eq:alg-vark-ODE} 
        \frac{d\xx}{dt} = \sum_{i=1}^m a_i \xx^{\yy_i} \vv w_i 
    }
for some unknown $a_i > 0$, it turns out that the answer as to whether a \WRz\ realization exists is the same: 

\begin{thm}
\label{thm:vark}
    For any $a_i > 0$, the system \eqref{eq:alg-vark-ODE}   admits a \WRz\ realization $(V,E,\vv\kk)$ if and only if the system \eqref{eq:alg-ODE} admits a \WRz\ realization $(V,E,\vv\kk^*)$. Moreover, $\kk_{ij} = a_i \kk^*_{ij}$. 
\end{thm}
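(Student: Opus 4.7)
The plan is to track how the inputs to \Cref{algorithm} transform when each net direction vector $\vv w_i$ is rescaled by a positive constant $a_i$, and to check that every decision point of the algorithm produces the same outcome on both systems. The matrix of source vertices $\mm Y_s$ is unchanged, while the matrix of net direction vectors of \eqref{eq:alg-vark-ODE} is $\mm W' = \mm W \mm A$, where $\mm A = \diag(a_1, a_2, \ldots, a_m)$ is a positive diagonal (hence invertible) matrix. The whole proof reduces to checking that each step of \Cref{algorithm} commutes, in a suitable sense, with right-multiplication by $\mm A$.

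First I would analyze how $\mm A$ acts on the steady-state flux cone. Since $\mm A$ is invertible, $\ker \mm W' = \mm A^{-1} \ker \mm W$, and since $\mm A^{-1}$ is diagonal with strictly positive entries, it maps $\rrp^m$ bijectively onto itself and preserves supports coordinate-wise. Consequently, if $\{\vv c_1^*, \ldots, \vv c_\ell^*\}$ is a minimal generating set of $\ker \mm W \cap \rrp^m$, then $\{\mm A^{-1} \vv c_1^*, \ldots, \mm A^{-1} \vv c_\ell^*\}$ is a minimal generating set of $\ker \mm W' \cap \rrp^m$, with identical supports. In particular, the partition test following line~\ref{alg:exit1} succeeds or fails identically on both systems, and when it succeeds both yield the same candidate components $V_p = \mrm{supp}(\vv c_p^*) = \mrm{supp}(\mm A^{-1}\vv c_p^*)$.

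Next I would dispatch the remaining decision points. The affine-independence check in line~\ref{alg:if-affine} depends only on $\mm Y_s$, so it is unaffected. The cone-membership check in line~\ref{alg:if-cone} is likewise invariant, because $\vv w_i \in \mrm{Cone}\{\yy_j - \yy_i : j \in V_p\}$ if and only if $a_i \vv w_i$ lies in the same cone, as $a_i > 0$. Finally, affine independence of $\{\yy_j : j \in V_p\}$ makes the decomposition in line~\ref{alg:decompose-wi} unique, so $\vv w_i = \sum_j \kk^*_{ij}(\yy_j - \yy_i)$ with $\kk^*_{ij} \geq 0$ is equivalent to $a_i \vv w_i = \sum_j (a_i \kk^*_{ij})(\yy_j - \yy_i)$, forcing $\kk_{ij} = a_i \kk^*_{ij}$. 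Since $a_i > 0$, we also have $\kk_{ij} > 0 \iff \kk^*_{ij} > 0$, so the edge sets coincide.

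Assembling these observations and invoking \Cref{thm:alg}, the algorithm succeeds on \eqref{eq:alg-vark-ODE} if and only if it succeeds on \eqref{eq:alg-ODE}, producing realizations on the same weighted graph structure $(V,E)$ with weights related by $\kk_{ij} = a_i \kk^*_{ij}$. I do not anticipate a genuine obstacle here; the main point to be careful about is merely verifying that the positive diagonal scaling by $\mm A$ preserves both the generating-cone structure (via supports) and the signed/unsigned cone-decomposition tests at the per-vertex level.
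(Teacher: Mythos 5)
Your proof is correct, but it takes a genuinely different route from the paper's. The paper argues directly at the level of realizations: given a \WRz\ realization $(V,E,\vv\kk^*)$ of \eqref{eq:alg-ODE}, it sets $\kk_{ij} = a_i\kk^*_{ij}$, observes that this does not change the edge set (so weak reversibility and deficiency zero, being properties of the unweighted graph $(V,E)$, are preserved), verifies by a one-line computation that the net direction vector from $\yy_i$ becomes $a_i\vv w_i$, and gets the converse by symmetry; \Cref{algorithm} is never invoked. Your argument instead traces the algorithm: writing the new coefficient matrix as $\mm W\,\diag(a_1,\ldots,a_m)$, you check that every decision point (the support partition of the minimal generators of the kernel cone, the affine-independence test in line~\ref{alg:if-affine}, the cone-membership test in line~\ref{alg:if-cone}, and the unique decomposition in line~\ref{alg:decompose-wi}) is invariant under positive diagonal rescaling, and then conclude via \Cref{thm:alg}. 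Both are sound; the paper's is shorter and independent of the algorithm's correctness, while yours establishes the slightly stronger operational fact that the algorithm's entire execution trace is identical for \eqref{eq:alg-ODE} and \eqref{eq:alg-vark-ODE}, which is precisely what justifies running \Cref{algorithm} when the $a_i$ are unknown (the point of \Cref{sec:unspec_coeff}). The one step worth spelling out in your version is why a positive diagonal isomorphism carries a \emph{minimal} generating set of the pointed cone $\ker\mm W\cap\rrp^m$ to one of the image cone --- namely, a linear isomorphism preserving $\rrp^m$ maps extreme rays to extreme rays --- but this is routine.
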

\begin{proof}  
    The forward implication is trivial. We focus our attention on the other direction. 
    For any $i$, $j$, let $\kk_{ij} = a_i \kk^*_{ij}$, so $\kk_{ij} > 0$ if and only if $\kk^*_{ij} > 0$. In other words, the weighted E-graph $(V,E,\vv\kk)$ shares the same set of edges as $(V,E,\vv\kk^*)$. Because the deficiency is characterized by affine and linear independence of the connected components, and the two graphs share the same structure, $(V,E,\vv\kk)$ is weakly reversible and deficiency zero if and only if $(V,E,\vv\kk^*)$ is. 
    
    Suppose $(V,E,\vv\kk^*)$ is a realization of \eqref{eq:alg-ODE}. Then in $(V,E,\vv\kk)$, the net direction vector from $\yy_i$ can be expanded using the realization $(V,E,\vv\kk^*)$, since 
    \eq{
         a_i \vv w_i  = a_i\sum_{(i,j) \in E}  \kk^*_{ij} (\yy_j - \yy_i) = \sum_{(i,j) \in E} \kk_{ij} (\yy_j - \yy_i) . 
    }
    Therefore, $(V,E,\vv\kk)$ realizes \eqref{eq:alg-vark-ODE}. 
\end{proof}

\subsection{Deficiency zero realizations that are not weakly reversible}

If a polynomial dynamical system admits a deficiency zero realization that is {\em not} weakly reversible, then its dynamics is also greatly restricted: it can have no positive steady states, no oscillations, and no chaotic dynamics~\cite{horn1972necessary, Feinberg1972, feinberg2019foundations}. Actually, such realizations are special examples of mass-action system that are {\em not consistent}~\cite{AndersonBrunnerCraciunJohnston2020}. An E-graph $(V,E)$ is said to be {\em consistent} if there exist real numbers $\alpha_{ij}>0$ such that 
\eq{ 
    \sum_{(i,j) \in E} \alpha_{ij}(\yy_j - \yy_i)  = \vv 0.
}
It is easy to see that a polynomial dynamical system of the form \eqref{eq:alg-ODE}  has a realization that is not consistent if and only if 
\begin{equation}
\label{inconsistent}
    \ker \mm W \cap \mathbb{R}^m_{>} = \emptyset.
\end{equation}
If a polynomial dynamical system has a realization that is not consistent, then it cannot have realization that is weakly reversible, because weakly reversible systems must have at least one positive steady state~\cite{Boros2019}. 
Therefore, if \Cref{algorithm} is accompanied by a preprocessing step that checks condition~\eqref{inconsistent}, then that step will  decide whether our  given system \eqref{eq:alg-ODE}  has a realization that is not consistent; in particular, this step will also find all cases where our given  system has a deficiency zero realization that is {\em not} weakly reversible.

\section*{Acknowledgements} 

The authors were supported in part by the National Science Foundation under grants 
DMS--1816238 and DMS--2051568. G.C. was also partially supported by the Simons Foundation.  
\bibliographystyle{siam}
\bibliography{cit}

\end{document}